\newtheorem{theorem}{Theorem}
\theoremstyle{plain}
\newtheorem{definition}{Definition}
\newtheorem{remark}{Remark}
\numberwithin{equation}{section}
\begin{document}
\title[A UNIFICATION OF MULTIPLE TWISTED EULER AND GENOCCHI POLYNOMIALS]{A
UNIFICATION OF THE MULTIPLE TWISTED EULER AND GENOCCHI NUMBERS AND
POLYNOMIALS ASSOCIATED WITH $p$-ADIC $q$-INTEGRAL ON $%
%TCIMACRO{\U{2124} }%
%BeginExpansion
\mathbb{Z}
%EndExpansion
_{p}$ AT $q=-1$}
\author{Serkan Arac\i }
\address{University of Gaziantep, Faculty of Science and Arts, Department of
Mathematics, 27310 Gaziantep, TURKEY}
\email{mtsrkn@hotmail.com}
\author{Mehmet Acikgoz}
\address{University of Gaziantep, Faculty of Science and Arts, Department of
Mathematics, 27310 Gaziantep, TURKEY}
\email{acikgoz@gantep.edu.tr}
\author{Kyoung-Ho Park}
\address{Division of General Education-Mathematics, Kwangwoon University,
Seoul 139-171, Republic of Korea}
\email{sagamath@yahoo.co.kr}
\author{Hassan Jolany}
\address{School of Mathematics, Statistics and Computer Science, University
of Tehran, Iran }
\email{hassan.jolany@khayam.ut.ac.ir}
\date{October 26, 2011}
\subjclass{05A10, 11B65, 28B99, 11B68, 11B73.}
\keywords{ }

\begin{abstract}
The present paper deals with unification of the multiple twisted Euler and
Genocchi numbers and polynomials associated with $p$-adic $q$-integral on $%
%TCIMACRO{\U{2124} }%
%BeginExpansion
\mathbb{Z}
%EndExpansion
_{p}$ at $q=-1$. Some earlier results of Ozden's papers in terms of
unification of the multiple twisted Euler and Genocchi numbers and
polynomials associated with $p$-adic $q$-integral on $%
%TCIMACRO{\U{2124} }%
%BeginExpansion
\mathbb{Z}
%EndExpansion
_{p}$ at $q=-1$ can be deduced. We apply the method of generating function
and $p$-adic $q$-integral representation on $%
%TCIMACRO{\U{2124} }%
%BeginExpansion
\mathbb{Z}
%EndExpansion
_{p}$, which are exploited to derive further classes of Euler polynomials
and Genocchi polynomials. To be more precise we summarize our results as
follows, we obtain some relations between H.Ozden's generating function and
fermionic $p$-adic $q$-integral on $%
%TCIMACRO{\U{2124} }%
%BeginExpansion
\mathbb{Z}
%EndExpansion
_{p}$ at $q=-1$. Furthermore we derive Witt's type formula for the
unification of twisted Euler and Genocchi polynomials. Also we derive
distribution formula (Multiplication Theorem) for multiple twisted Euler and
Genocchi numbers and polynomials associated with $p$-adic $q$-integral on $%
%TCIMACRO{\U{2124} }%
%BeginExpansion
\mathbb{Z}
%EndExpansion
_{p}$ at $q=-1$ which yields a deeper insight into the effectiveness of this
type of generalizations. Furthermore we define unification of multiple
twisted zeta function and we obtain an interpolation formula between
unification of multiple twisted zeta function and unification of the
multiple twisted Euler and Genocchi numbers at negative integer. Our new
generating function possess a number of interesting properties which we
state in this paper.
\end{abstract}

\maketitle

\section{Introduction, Definitions and Notations}

Bernoulli numbers introduced by Jacques Bernoulli (1654-1705), in the second
part of his treatise published in 1713, Ars conjectandi, at the time,
Bernoulli numbers were used for writing the infinite series expansions of
hyperbolic and trigonometric functions. Van den berg was the first to
discuss finding recurrence formulae for the Bernoulli numbers with arbitrary
sized gaps (1881). Ramanujan showed how gaps of size 7 could be found, and
explicitly wrote out the recursion for gaps, of size 6. Lehmer in 1934
extended these methods to Euler numbers, Genocchi numbers, and Lucas numbers
(1934), and calculated the 196-th Bernoulli numbers. The study of
generalized Bernoulli, Euler and Genocchi numbers and polynomials and their
combinatorial relations has received much attention \cite{B. N. Que}, \cite%
{M. S. Kim}, \cite{Luo 1}-\cite{Luo 4}, \cite{Liu}, \cite{Frappier}, \cite%
{Jolany 1}, \cite{Jolany 2}, \cite{Vandiver}. Generalized Bernoulli
polynomials, generalized Euler polynomials and generalized Genocchi numbers
and polynomials are the signs of very strong bond between elementary number
theory, complex analytic number theory, Homotopy theory (stable Homotopy
groups of spheres), differential topology (differential structures on
spheres), theory of modular forms (Eisenstein series), $p$-adic analytic
number theory ($p$-adic $L$-functions), quantum physics(quantum Groups). $p$%
-adic numbers were invented by Kurt Hensel around the end of the nineteenth
century. In spite of their being already one hundred years old, these
numbers are still today enveloped in an aura of mystery within the
scientific community. The $p$-adic integral was used in mathematical
physics, for instance, the functional equation of the $q$-zeta function, $q$%
-stirling numbers and $q$-Mahler theory of integration with respect to the
ring $%
%TCIMACRO{\U{2124} }%
%BeginExpansion
\mathbb{Z}
%EndExpansion
_{p}$ together with Iwasawa's $p$-adic $q$-$L$ functions. Also the $p$-adic
interpolation functions of the Bernoulli and Euler polynomials have been
treated by Tsumura \cite{Tsumura} and Young \cite{Young}. Professor T.Kim 
\cite{kim 2}-\cite{Kim 24} also studied on $p$-adic interpolation functions
of these numbers and polynomials. In \cite{Carlitz}, Carlitz originally
constructed $q$-Bernoulli numbers and polynomials. These numbers and
polynomials are studied by many authors (see cf. \cite{kim 2}-\cite{Park}, 
\cite{Ozden}, \cite{Ozden 1}, \cite{Srivastava 1}). In the last decade, a
surprising number of papers appeared proposing new generalizations of the
Bernoulli, Euler and Genocchi polynomials to real and complex variables. In 
\cite{kim 2}-\cite{Kim 25}, Kim studied some families of multiple Bernoulli,
Euler and Genocchi numbers and polynomials. By using the fermionic $p$-adic
invariant integral on $%
%TCIMACRO{\U{2124} }%
%BeginExpansion
\mathbb{Z}
%EndExpansion
_{p}$, he constructed $p$-adic Bernoulli, Euler and Genocchi numbers and
polynomials of higher order. A unification (and generalization) of Bernoulli
polynomials and Euler polynomials with a,b and c parameters first was
introduced and investigated by Q.-M.Luo \cite{Luo 1}, \cite{Luo 2}, \cite%
{Luo 3}, \cite{Luo 4}. After he with H.M.Srivastava defined unification (and
generalization) of Apostol type Bernoulli polynomials with a, b and c
parameters of higher order \cite{Luo 4}. After Hacer Ozden et al \cite{Ozden
1}. unified and extended the generating functions of the generalized
Bernoulli polynomials, the generalized Euler polynomials and the generalized
Genocchi polynomials associated with the positive real parameters a and b
and the complex parameter. Also they by applying the Mellin transformation
to the generating function of the unification of Bernoulli, Euler and
Genocchi polynomials, constructed a unification of the zeta functions.
Actually their definition provides a generalization and unification of the
Bernoulli, Euler and Genocchi polynomials and also of the
Apostol--Bernoulli, Apostol--Euler and Apostol--Genocchi polynomials, which
were considered in many earlier investigations by (among others) Srivastava
et al. \cite{Srivastava 2}, \cite{Srivastava 3}, \cite{Srivastava 4},
Karande \cite{Karande}. Also they by using a Dirichlet character defined
unification of the generating functions of the generalized Bernoulli, Euler
and Genocchi polynomials and numbers . T. Kim in \cite{Kim 20}, constructed
Apostol-Euler numbers and polynomials by using fermionic expression of $p$%
-adic $q$-integral at $q=-1$. In this paper by his method we derive several
properties for unification of the multiple twisted Euler and Genocchi
numbers and polynomials.

Let $p$ be a fixed odd prime number. Throughout this paper we use the
following notations, by $%
%TCIMACRO{\U{2124} }%
%BeginExpansion
\mathbb{Z}
%EndExpansion
_{p}$ denotes the ring of $p$-adic rational integers, $%
%TCIMACRO{\U{211a} }%
%BeginExpansion
\mathbb{Q}
%EndExpansion
$ denotes the field of rational numbers, $%
%TCIMACRO{\U{211a} }%
%BeginExpansion
\mathbb{Q}
%EndExpansion
_{p}$ denotes the field of $p$-adic rational numbers, and $%
%TCIMACRO{\U{2102} }%
%BeginExpansion
\mathbb{C}
%EndExpansion
_{p}$ denotes the completion of algebraic closure of $%
%TCIMACRO{\U{211a} }%
%BeginExpansion
\mathbb{Q}
%EndExpansion
_{p}$. Let $%
%TCIMACRO{\U{2115} }%
%BeginExpansion
\mathbb{N}
%EndExpansion
$ be the set of natural numbers and $%
%TCIMACRO{\U{2115} }%
%BeginExpansion
\mathbb{N}
%EndExpansion
^{\ast }=%
%TCIMACRO{\U{2115} }%
%BeginExpansion
\mathbb{N}
%EndExpansion
\cup \left\{ 0\right\} .$ The $p$-adic absolute value is defined by $%
\left\vert p\right\vert _{p}=\frac{1}{p}.$ In this paper we assume $%
\left\vert q-1\right\vert _{p}<1$ as an indeterminate. $\left[ x\right] _{q}$
is a $q$-extension of $x$ which is defined by $\left[ x\right] _{q}=\frac{%
1-q^{x}}{1-q}$,\ we note that $\lim_{q\rightarrow 1}\left[ x\right] _{q}=x$.

We say that $f$ is a uniformly differntiable function at a point $a\in 
%TCIMACRO{\U{2124} }%
%BeginExpansion
\mathbb{Z}
%EndExpansion
_{p},$ if the difference quotient 
\begin{equation*}
F_{f}\left( x,y\right) =\frac{f\left( x\right) -f\left( y\right) }{x-y}
\end{equation*}

has a limit $f%
%TCIMACRO{\U{b4}}%
%BeginExpansion
{\acute{}}%
%EndExpansion
\left( a\right) $ as $\left( x,y\right) \rightarrow \left( a,a\right) $ and
denote this by $f\in UD\left( 
%TCIMACRO{\U{2124} }%
%BeginExpansion
\mathbb{Z}
%EndExpansion
_{p}\right) .$

\bigskip Let $UD\left( 
%TCIMACRO{\U{2124} }%
%BeginExpansion
\mathbb{Z}
%EndExpansion
_{p}\right) $ be the set of uniformly differentiable function on $%
%TCIMACRO{\U{2124} }%
%BeginExpansion
\mathbb{Z}
%EndExpansion
_{p}.$ For $f\in UD\left( 
%TCIMACRO{\U{2124} }%
%BeginExpansion
\mathbb{Z}
%EndExpansion
_{p}\right) ,$ let us begin with the expressions%
\begin{equation*}
\frac{1}{\left[ p^{N}\right] }\sum_{0\leq x<p^{N}}f\left( x\right)
q^{x}=\sum_{0\leq x<p^{N}}f\left( x\right) \mu _{q}\left( x+p^{N}%
%TCIMACRO{\U{2124} }%
%BeginExpansion
\mathbb{Z}
%EndExpansion
_{p}\right) ,
\end{equation*}

represents $p$-adic $q$-analogue of Riemann sums for $f.$ \ The integral of $%
f$ on $%
%TCIMACRO{\U{2124} }%
%BeginExpansion
\mathbb{Z}
%EndExpansion
_{p}$ will be defined as the limit $\left( N\rightarrow \infty \right) $ of
these sums, when it exists. The $p$-adic $q$-integral of function $f\in
UD\left( 
%TCIMACRO{\U{2124} }%
%BeginExpansion
\mathbb{Z}
%EndExpansion
_{p}\right) $ is defined by $Kim$ 
\begin{equation}
I_{q}\left( f\right) =\int_{%
%TCIMACRO{\U{2124} }%
%BeginExpansion
\mathbb{Z}
%EndExpansion
_{p}}f\left( x\right) d\mu _{q}\left( x\right) =\lim_{N\rightarrow \infty }%
\frac{1}{\left[ p^{N}\right] _{q}}\sum_{x=0}^{p^{N}-1}f\left( x\right) q^{x}%
\text{ }  \label{equation 1}
\end{equation}

The bosonic integral is considered by $Kim$ as the bosonic limit $%
q\rightarrow 1,$ $I_{1}\left( f\right) =\lim_{q\rightarrow 1}I_{q}\left(
f\right) .$ Similarly, the fermionic $p$-adic integral on $%
%TCIMACRO{\U{2124} }%
%BeginExpansion
\mathbb{Z}
%EndExpansion
_{p}$ is considered by $Kim$ as follows:%
\begin{equation*}
I_{-q}\left( f\right) =\lim_{q\rightarrow -q}I_{q}\left( f\right) =\int_{%
%TCIMACRO{\U{2124} }%
%BeginExpansion
\mathbb{Z}
%EndExpansion
_{p}}f\left( x\right) d\mu _{-q}\left( x\right)
\end{equation*}

Assume that $q\rightarrow 1,$ then we have fermionic $p$-adic fermionic
integral on $%
%TCIMACRO{\U{2124} }%
%BeginExpansion
\mathbb{Z}
%EndExpansion
_{p}$ as follows%
\begin{equation}
I_{-1}\left( f\right) =\lim_{q\rightarrow -1}I_{q}\left( f\right)
=\lim_{N\rightarrow \infty }\sum_{x=0}^{p^{N}-1}f\left( x\right) \left(
-1\right) ^{x}.  \label{equation 6}
\end{equation}

If we take $f_{1}\left( x\right) =f\left( x+1\right) $ in (\ref{equation 6}%
), then we have 
\begin{equation}
I_{-1}\left( f_{1}\right) +I_{-1}\left( f\right) =2f\left( 0\right) .
\label{equation 7}
\end{equation}

Let $p$ be a fixed prime. For a fixed positive integer $d$ with $\left(
p,d\right) =1,$ we set 
\begin{eqnarray*}
X &=&X_{d}=\lim_{\overset{\leftarrow }{N}}%
%TCIMACRO{\U{2124} }%
%BeginExpansion
\mathbb{Z}
%EndExpansion
/dp^{N}%
%TCIMACRO{\U{2124} }%
%BeginExpansion
\mathbb{Z}
%EndExpansion
, \\
X_{1} &=&%
%TCIMACRO{\U{2124} }%
%BeginExpansion
\mathbb{Z}
%EndExpansion
_{p}, \\
X^{\ast } &=&\underset{\underset{\left( a,p\right) =1}{0<a<dp}}{\cup }a+dp%
%TCIMACRO{\U{2124} }%
%BeginExpansion
\mathbb{Z}
%EndExpansion
_{p}
\end{eqnarray*}

and 
\begin{equation*}
a+dp^{N}%
%TCIMACRO{\U{2124} }%
%BeginExpansion
\mathbb{Z}
%EndExpansion
_{p}=\left\{ x\in X\mid x\equiv a\left( \func{mod}dp^{N}\right) \right\} ,
\end{equation*}

where $a\in 
%TCIMACRO{\U{2124} }%
%BeginExpansion
\mathbb{Z}
%EndExpansion
$ satisfies the condition $0\leq a<dp^{N}.$

\begin{definition}
(see, cf. \cite{ozden 2}). A unification $y_{n,\beta }\left( x:k,a,b\right) $
of the Bernoulli, Euler and Genochhi polynomials is given by the following
generating function:%
\begin{eqnarray}
F_{a,b}\left( x;t;k,\beta \right) &=&\frac{2\left( \frac{t}{2}\right) ^{k}}{%
\beta ^{b}e^{t}-a^{b}}e^{xt}=\sum_{n=0}^{\infty }y_{n,\beta }\left(
x:k,a,b\right) \frac{t^{n}}{n!}\text{ \ }\left( \left\vert t+\log \left( 
\frac{\beta }{a}\right) \right\vert <2\pi ;\text{ }x\in 
%TCIMACRO{\U{211d} }%
%BeginExpansion
\mathbb{R}
%EndExpansion
\right)  \notag \\
&&\left( k\in 
%TCIMACRO{\U{2115} }%
%BeginExpansion
\mathbb{N}
%EndExpansion
^{\ast };a,b\in 
%TCIMACRO{\U{211d} }%
%BeginExpansion
\mathbb{R}
%EndExpansion
^{+};\beta \in 
%TCIMACRO{\U{2102} }%
%BeginExpansion
\mathbb{C}
%EndExpansion
\right) ,  \label{equation 8}
\end{eqnarray}%
where as usual $%
%TCIMACRO{\U{211d} }%
%BeginExpansion
\mathbb{R}
%EndExpansion
^{+},$ and $%
%TCIMACRO{\U{2102} }%
%BeginExpansion
\mathbb{C}
%EndExpansion
$ denote the sets of positive real numbers and complex numbers,
respectively, $%
%TCIMACRO{\U{211d} }%
%BeginExpansion
\mathbb{R}
%EndExpansion
$ being the set of real numbers.
\end{definition}

Observe that, if we put $x=0$ in the generating function (\ref{equation 8}),
then we obtain the corresponding unification of the generating functions of
Bernoulli, Euler and Genocchi numbers. Then we have%
\begin{equation*}
y_{n,\beta }\left( 0:k,a,b\right) =y_{n,\beta }\left( k,a,b\right) .
\end{equation*}

We are now ready to give relationship between the Ozden's generating
function and the fermionic $p$-adic $q$-integral on $%
%TCIMACRO{\U{2124} }%
%BeginExpansion
\mathbb{Z}
%EndExpansion
_{p}$ at $q=-1$ with the following theorem:

\begin{theorem}
The following relationship holds:%
\begin{equation}
a^{-b}\left( \frac{t}{2}\right) ^{k}\int_{%
%TCIMACRO{\U{2124} }%
%BeginExpansion
\mathbb{Z}
%EndExpansion
_{p}}\left( -1\right) ^{x+1}\left( \frac{\beta }{a}\right) ^{bx}e^{tx}d\mu
_{-1}\left( x\right) =\sum_{n=0}^{\infty }y_{n,\beta }\left( k,a,b\right) 
\frac{t^{n}}{n!}.  \label{equation 9}
\end{equation}
\end{theorem}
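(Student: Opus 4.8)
The plan is to recognize the right-hand side as the $x=0$ specialization of the generating function (\ref{equation 8}) and to evaluate the integral on the left via the fundamental fermionic identity (\ref{equation 7}). First I would record that putting $x=0$ in (\ref{equation 8}) yields
\begin{equation*}
\frac{2\left(\frac{t}{2}\right)^{k}}{\beta^{b}e^{t}-a^{b}}=\sum_{n=0}^{\infty}y_{n,\beta}\left(k,a,b\right)\frac{t^{n}}{n!},
\end{equation*}
so it suffices to prove the closed-form evaluation
\begin{equation*}
a^{-b}\left(\frac{t}{2}\right)^{k}\int_{\mathbb{Z}_{p}}\left(-1\right)^{x+1}\left(\frac{\beta}{a}\right)^{bx}e^{tx}\,d\mu_{-1}\left(x\right)=\frac{2\left(\frac{t}{2}\right)^{k}}{\beta^{b}e^{t}-a^{b}}.
\end{equation*}

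Next I would apply (\ref{equation 7}) with $f(x)=\left(-1\right)^{x+1}\left(\frac{\beta}{a}\right)^{bx}e^{tx}$, so that $I_{-1}(f)$ is precisely the integral on the left. The key computation is the translate $f_{1}(x)=f(x+1)$: since $(-1)^{x+2}=-(-1)^{x+1}$ and $\left(\frac{\beta}{a}\right)^{b(x+1)}e^{t(x+1)}=\left(\frac{\beta}{a}\right)^{b}e^{t}\left(\frac{\beta}{a}\right)^{bx}e^{tx}$, one finds
\begin{equation*}
f_{1}(x)=-\left(\frac{\beta}{a}\right)^{b}e^{t}\,f(x).
\end{equation*}
By linearity of $I_{-1}$ this gives $I_{-1}(f_{1})=-\left(\frac{\beta}{a}\right)^{b}e^{t}I_{-1}(f)$, and since $f(0)=-1$, identity (\ref{equation 7}) becomes the linear equation
\begin{equation*}
\left(1-\left(\frac{\beta}{a}\right)^{b}e^{t}\right)I_{-1}(f)=-2,
\end{equation*}
whence $I_{-1}(f)=\dfrac{2}{\left(\frac{\beta}{a}\right)^{b}e^{t}-1}$.

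Finally I would multiply by $a^{-b}\left(\frac{t}{2}\right)^{k}$ and clear the denominator using $\left(\frac{\beta}{a}\right)^{b}e^{t}-1=\left(\beta^{b}e^{t}-a^{b}\right)/a^{b}$; the factor $a^{-b}$ cancels against the resulting $a^{b}$ and reproduces exactly $\frac{2\left(\frac{t}{2}\right)^{k}}{\beta^{b}e^{t}-a^{b}}$, i.e.\ the series displayed above, which completes the proof.

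The step I expect to be most delicate is the sign bookkeeping in the translate $f_{1}$. The explicit factor $(-1)^{x+1}$ in the integrand is exactly what is needed: under the shift $x\mapsto x+1$ it changes sign, and this is what turns what would otherwise be a $+1$ in the denominator into the $-1$ that produces the subtraction $\beta^{b}e^{t}-a^{b}$ rather than a sum. I would also remark that these manipulations are legitimate under the stated hypotheses $\left|q-1\right|_{p}<1$ and $\left|t+\log(\beta/a)\right|<2\pi$, which ensure that $f\in UD\left(\mathbb{Z}_{p}\right)$ and that both the integral and the resulting geometric-type expression converge, so that the functional equation (\ref{equation 7}) applies and the formal series identification is valid.
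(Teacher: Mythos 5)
Your proposal is correct and follows essentially the same route as the paper: both apply the fermionic identity (\ref{equation 7}) to the integrand $(-1)^{x+1}\left(\frac{\beta}{a}\right)^{bx}e^{tx}$, use the shift relation $f_{1}(x)=-\left(\frac{\beta}{a}\right)^{b}e^{t}f(x)$ together with $f(0)$ to solve a linear equation for $I_{-1}(f)$, and then identify the result with the $x=0$ case of (\ref{equation 8}); the only cosmetic difference is that the paper carries the constant factor $a^{-b}\left(\frac{t}{2}\right)^{k}$ inside $f$ while you factor it out by linearity.
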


\begin{proof}
We set $f\left( x\right) =a^{-b}\left( \frac{t}{2}\right) ^{k}\left(
-1\right) ^{x+1}\left( \frac{\beta }{a}\right) ^{bx}e^{tx}$ in (\ref%
{equation 7}), it is easy to show 
\begin{eqnarray*}
a^{-b}\left( \frac{t}{2}\right) ^{k}\left( -\left( \frac{\beta }{a}\right)
^{b}e^{t}+1\right) \int_{%
%TCIMACRO{\U{2124} }%
%BeginExpansion
\mathbb{Z}
%EndExpansion
_{p}}\left( -1\right) ^{x+1}\left( \frac{\beta }{a}\right) ^{bx}e^{tx}d\mu
_{-1}\left( x\right) &=&-\frac{2\left( \frac{t}{2}\right) ^{k}}{a^{b}} \\
a^{-b}\left( \frac{t}{2}\right) ^{k}\int_{%
%TCIMACRO{\U{2124} }%
%BeginExpansion
\mathbb{Z}
%EndExpansion
_{p}}\left( -1\right) ^{x+1}\left( \frac{\beta }{a}\right) ^{bx}e^{tx}d\mu
_{-1}\left( x\right) &=&\frac{2\left( \frac{t}{2}\right) ^{k}}{\beta
^{b}e^{t}-a^{b}}
\end{eqnarray*}

So, we complete the proof.
\end{proof}

\begin{theorem}
Then the following identity holds:%
\begin{equation*}
\int_{%
%TCIMACRO{\U{2124} }%
%BeginExpansion
\mathbb{Z}
%EndExpansion
_{p}}\left( -1\right) ^{x+1}\left( \frac{\beta }{a}\right) ^{bx}x^{n-k}d\mu
_{-1}\left( x\right) =2^{k}a^{b}\frac{\left( n-k\right) !}{n!}y_{n,\beta
}\left( k,a,b\right) .
\end{equation*}
\end{theorem}

\begin{proof}
From (\ref{equation 9}) and by using the taylor expansion of $e^{tx},$ we
readily see that,%
\begin{equation*}
\sum_{n=0}^{\infty }\left( 2^{-k}a^{-b}\int_{%
%TCIMACRO{\U{2124} }%
%BeginExpansion
\mathbb{Z}
%EndExpansion
_{p}}\left( -1\right) ^{x+1}\left( \frac{\beta }{a}\right) ^{bx}x^{n}d\mu
_{-1}\left( x\right) \right) \frac{t^{n+k}}{n!}=\sum_{n=0}^{\infty
}y_{n,\beta }\left( k,a,b\right) \frac{t^{n}}{n!}
\end{equation*}

By comparing coefficients of $t^{n}$ in the both sides of the above
equation, we arrive at the desired result.
\end{proof}

Similarly, we obtain te following theorem for a unification of the Euler and
Genocchi polynomials as follows:

\begin{theorem}
Then the following identity holds:%
\begin{equation}
\int_{%
%TCIMACRO{\U{2124} }%
%BeginExpansion
\mathbb{Z}
%EndExpansion
_{p}}\left( -1\right) ^{y+1}\left( \frac{\beta }{a}\right) ^{by}\left(
x+y\right) ^{n}d\mu _{-1}\left( y\right) =2^{k}a^{b}\frac{n!}{\left(
n+k\right) !}y_{n+k,\beta }\left( x:k,a,b\right) .  \label{equation 16}
\end{equation}
\end{theorem}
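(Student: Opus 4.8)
The plan is to mimic the proof of the numbers case (the previous theorem) but with the shift $x$ inserted into the exponential. Starting from the defining relationship in Theorem with $x=0$ replaced by a variable shift, I would first establish the polynomial analogue of equation~(\ref{equation 9}), namely that
\begin{equation*}
a^{-b}\left( \frac{t}{2}\right) ^{k}\int_{
\mathbb{Z}
_{p}}\left( -1\right) ^{y+1}\left( \frac{\beta }{a}\right) ^{by}e^{\left( x+y\right) t}d\mu _{-1}\left( y\right) =\sum_{n=0}^{\infty }y_{n,\beta }\left( x:k,a,b\right) \frac{t^{n}}{n!}.
\end{equation*}
This follows by the same computation as in the first theorem: set $g\left( y\right) =a^{-b}\left( \frac{t}{2}\right) ^{k}\left( -1\right) ^{y+1}\left( \frac{\beta }{a}\right) ^{by}e^{\left( x+y\right) t}$ and apply (\ref{equation 7}), which produces the factor $e^{xt}$ pulled out front and reproduces $F_{a,b}\left( x;t;k,\beta \right)$ on the right.

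Next I would extract the integrand $\left( x+y\right)^n$ by expanding $e^{\left( x+y\right) t}$ as its Taylor series $\sum_{n\geq 0}\left( x+y\right)^n t^n/n!$ and interchanging the (finite, hence harmless) integral with the sum. This gives
\begin{equation*}
\sum_{n=0}^{\infty }\left( 2^{-k}a^{-b}\int_{
\mathbb{Z}
_{p}}\left( -1\right) ^{y+1}\left( \frac{\beta }{a}\right) ^{by}\left( x+y\right) ^{n}d\mu _{-1}\left( y\right) \right) \frac{t^{n+k}}{n!}=\sum_{n=0}^{\infty }y_{n,\beta }\left( x:k,a,b\right) \frac{t^{n}}{n!}.
\end{equation*}

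Finally I would compare coefficients of $t^m$ on both sides. On the left the power $t^{n+k}$ contributes to $t^m$ when $n=m-k$, so matching against the right-hand coefficient $y_{m,\beta }\left( x:k,a,b\right)/m!$ and then relabeling $m\mapsto n+k$ yields exactly (\ref{equation 16}), with the factor $2^{k}a^{b}$ and the ratio $n!/\left( n+k\right)!$ emerging from the bookkeeping of the two factorials. The only step requiring any care is the index shift in the coefficient comparison, since the left side carries $t^{n+k}$ rather than $t^n$; everything else is a verbatim transcription of the preceding proof with the shift $x$ carried along inside the exponential, so I expect no genuine obstacle beyond keeping the factorial normalization straight.
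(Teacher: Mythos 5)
Your proposal is correct and follows essentially the same route as the paper: the paper gives no explicit proof of this theorem, disposing of it with the word ``similarly'' in reference to the preceding theorem, and your argument---applying (\ref{equation 7}) to the integrand carrying the shift $x$ in the exponential to recover Ozden's generating function $F_{a,b}\left( x;t;k,\beta \right)$, then Taylor-expanding $e^{\left( x+y\right) t}$ and comparing coefficients of $t^{n+k}$---is precisely the intended computation. The factorial bookkeeping in your final step is right, so there is no gap.
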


From the binomial theorem in (\ref{equation 16}), we possess the following
theorem:

\begin{theorem}
The following relation holds:%
\begin{equation*}
\frac{y_{n+k,\beta }\left( x:k,a,b\right) }{\binom{n+k}{k}}=\sum_{m=0}^{n}%
\frac{\binom{n}{m}}{\binom{m+k}{k}}y_{m+k,\beta }\left( k,a,b\right) x^{n-m}
\end{equation*}
\end{theorem}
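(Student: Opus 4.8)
The plan is to take equation (\ref{equation 16}) as the single engine of the argument and to expand the shifted power $(x+y)^{n}$ inside the integral by the ordinary binomial theorem. Since the variable of integration is $y$ while $x$ is merely a parameter, one may write
\begin{equation*}
(x+y)^{n}=\sum_{m=0}^{n}\binom{n}{m}x^{n-m}y^{m}.
\end{equation*}
Substituting this into the left-hand side of (\ref{equation 16}) and using the linearity of the fermionic integral $I_{-1}$ to pull the finite sum and the constants $x^{n-m}$ outside the integral produces
\begin{equation*}
2^{k}a^{b}\frac{n!}{(n+k)!}y_{n+k,\beta}\left(x:k,a,b\right)=\sum_{m=0}^{n}\binom{n}{m}x^{n-m}\int_{\mathbb{Z}_{p}}\left(-1\right)^{y+1}\left(\frac{\beta}{a}\right)^{by}y^{m}\,d\mu_{-1}\left(y\right).
\end{equation*}

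Next I would evaluate each integral on the right in closed form. Setting $x=0$ in (\ref{equation 16}) and replacing $n$ by $m$, while recalling the convention $y_{m+k,\beta}\left(0:k,a,b\right)=y_{m+k,\beta}\left(k,a,b\right)$, gives precisely the number-version identity
\begin{equation*}
\int_{\mathbb{Z}_{p}}\left(-1\right)^{y+1}\left(\frac{\beta}{a}\right)^{by}y^{m}\,d\mu_{-1}\left(y\right)=2^{k}a^{b}\frac{m!}{(m+k)!}y_{m+k,\beta}\left(k,a,b\right).
\end{equation*}
Inserting this back and cancelling the common factor $2^{k}a^{b}$ from both sides yields
\begin{equation*}
\frac{n!}{(n+k)!}y_{n+k,\beta}\left(x:k,a,b\right)=\sum_{m=0}^{n}\binom{n}{m}\frac{m!}{(m+k)!}y_{m+k,\beta}\left(k,a,b\right)x^{n-m}.
\end{equation*}

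The only task that remains is cosmetic: I would multiply both sides by $k!$ and convert the factorial ratios into binomial coefficients via $\frac{k!\,n!}{(n+k)!}=1/\binom{n+k}{k}$ and $\frac{k!\,m!}{(m+k)!}=1/\binom{m+k}{k}$, which recasts the identity in exactly the claimed shape. The conceptual content is carried entirely by (\ref{equation 16}), the binomial theorem, and the linearity of the $p$-adic integral; the only point requiring care is the bookkeeping of the factorial prefactors, so that the $k!$ factors introduced on both sides cancel consistently against the two different binomial coefficients $\binom{n+k}{k}$ and $\binom{m+k}{k}$. I do not expect any genuine obstacle beyond this routine rearrangement.
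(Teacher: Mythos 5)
Your proposal is correct and follows essentially the same route as the paper: expand $(x+y)^{n}$ by the binomial theorem inside the integral of (\ref{equation 16}), evaluate each resulting integral $\int_{\mathbb{Z}_{p}}(-1)^{y+1}\left(\frac{\beta}{a}\right)^{by}y^{m}\,d\mu_{-1}(y)$ via the number-case Witt formula, and rearrange the factorials into the stated binomial coefficients. The only (immaterial) difference is that you obtain the number-case formula by setting $x=0$ in (\ref{equation 16}), while the paper invokes its earlier theorem for $y_{n,\beta}(k,a,b)$ directly; your write-up is in fact more explicit about the bookkeeping than the paper's own two-line proof.
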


\begin{proof}
By using (\ref{equation 16}) and binomial theorem, we express the following
relation%
\begin{equation*}
\sum_{m=0}^{n}\binom{n}{m}\left( \int_{%
%TCIMACRO{\U{2124} }%
%BeginExpansion
\mathbb{Z}
%EndExpansion
_{p}}\left( -1\right) ^{y+1}\left( \frac{\beta }{a}\right) ^{by}y^{m}d\mu
_{-1}\left( y\right) \right) x^{n-m}=2^{k}a^{b}\frac{n!}{\left( n+k\right) !}%
y_{n+k,\beta }\left( x:k,a,b\right)
\end{equation*}

By using $p$-adic $q$-integral on $%
%TCIMACRO{\U{2124} }%
%BeginExpansion
\mathbb{Z}
%EndExpansion
_{p}$ at $q=-1,$ we arrive at the desired proof of the theorem.
\end{proof}

Now, we consider symmetric properties of this type of polynomials as follows:

\begin{theorem}
The following relation holds:%
\begin{equation*}
y_{n,\beta ^{-1}}\left( 1-x:k,a^{-1},b\right) =\left( -1\right)
^{k+n+1}\beta ^{b}a^{b}y_{n,\beta }\left( x:k,a,b\right) .
\end{equation*}
\end{theorem}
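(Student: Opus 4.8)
The plan is to prove the identity directly from the generating-function definition (\ref{equation 8}) by comparing Taylor coefficients, rather than through the $p$-adic integral. First I would write the generating function of the left-hand family by substituting $\beta\mapsto\beta^{-1}$, $a\mapsto a^{-1}$, and $x\mapsto 1-x$ into (\ref{equation 8}), which gives $F_{a^{-1},b}(1-x;t;k,\beta^{-1})=\dfrac{2(t/2)^{k}}{\beta^{-b}e^{t}-a^{-b}}\,e^{(1-x)t}$. Since the target sign carries a factor $(-1)^{n}$, the decisive move is to evaluate this generating function at $-t$ in place of $t$; the coefficient of $t^{n}/n!$ then picks up exactly the factor $(-1)^{n}$ that must be matched on the right-hand side.

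Next I would massage $F_{a^{-1},b}(1-x;-t;k,\beta^{-1})$ into a scalar multiple of $F_{a,b}(x;t;k,\beta)$. The key algebraic step is to clear the negative exponents in the denominator by multiplying numerator and denominator by $\beta^{b}a^{b}e^{t}$: this converts $\beta^{-b}e^{-t}-a^{-b}$ into $a^{b}-\beta^{b}e^{t}=-(\beta^{b}e^{t}-a^{b})$, reproducing the original denominator up to sign. Simultaneously I would track the two remaining pieces: the prefactor $(-t/2)^{k}=(-1)^{k}(t/2)^{k}$ supplies the $(-1)^{k}$, while the exponential $e^{(1-x)(-t)}\cdot e^{t}=e^{xt}$ collapses precisely to the factor appearing in $F_{a,b}(x;t;k,\beta)$. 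Assembling these pieces shows $F_{a^{-1},b}(1-x;-t;k,\beta^{-1})=(-1)^{k+1}\beta^{b}a^{b}\,F_{a,b}(x;t;k,\beta)$.

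Finally I would expand both sides as power series in $t$ and equate the coefficients of $t^{n}/n!$. On the left the reflection $t\mapsto -t$ contributes $(-1)^{n}y_{n,\beta^{-1}}(1-x:k,a^{-1},b)$, while the right contributes $(-1)^{k+1}\beta^{b}a^{b}y_{n,\beta}(x:k,a,b)$; solving for the left polynomial and using $(-1)^{k+1-n}=(-1)^{k+n+1}$ yields the claimed relation. The only genuine obstacle is the sign-and-exponent bookkeeping: one must handle the two parameter inversions, the $t\mapsto -t$ reflection, and the clearing of the denominator all at once without dropping a sign, and in particular verify that the exponential argument reduces exactly to $xt$ and that the accumulated signs collapse to $(-1)^{k+n+1}$.
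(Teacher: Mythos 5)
Your proof is correct, and it takes a genuinely different route from the paper's. You argue purely at the level of Ozden's generating function (\ref{equation 8}): after the substitutions $\beta \mapsto \beta^{-1}$, $a \mapsto a^{-1}$, $x \mapsto 1-x$ and the reflection $t \mapsto -t$, multiplying numerator and denominator by $\beta^{b}a^{b}e^{t}$ indeed gives $F_{a^{-1},b}\left( 1-x;-t;k,\beta ^{-1}\right) =\left( -1\right) ^{k+1}\beta ^{b}a^{b}F_{a,b}\left( x;t;k,\beta \right)$, and comparing coefficients of $t^{n}/n!$ yields the stated identity; your sign bookkeeping (the $(-1)^{k}$ from $(-t/2)^{k}$, the $(-1)$ from the denominator, the $(-1)^{n}$ from $t\mapsto -t$, and the collapse $e^{t}e^{-(1-x)t}=e^{xt}$) checks out. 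The paper instead substitutes the same parameter changes into the Witt-type $p$-adic integral formula (\ref{equation 16}) and converts the resulting integral back into the one representing $y_{n+k,\beta }\left( x:k,a,b\right)$; that computation implicitly relies on the reflection invariance of the fermionic integral, $\int_{\mathbb{Z}_{p}}f\left( y\right) d\mu _{-1}\left( y\right) =\int_{\mathbb{Z}_{p}}f\left( -1-y\right) d\mu _{-1}\left( y\right)$, which the paper never states, and it is written tersely (its middle line even misplaces a sign inside $(x-1+y)^{n}$). Your approach buys elementarity and completeness: it needs no $p$-adic machinery, makes every sign explicit, and proves the identity for all $n\geq 0$, whereas the integral route, since (\ref{equation 16}) produces the index $n+k$ rather than $n$, directly covers only indices at least $k$ and requires an index shift at the end. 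What the paper's route buys is thematic coherence: it keeps the symmetry property inside the fermionic-integral framework that the whole paper is built on.
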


\begin{proof}
We set $x\rightarrow 1-x,$ $\beta \rightarrow \beta ^{-1}$ and $a\rightarrow
a^{-1}$ into (\ref{equation 16}), that is%
\begin{eqnarray*}
&&\int_{%
%TCIMACRO{\U{2124} }%
%BeginExpansion
\mathbb{Z}
%EndExpansion
_{p}}\left( -1\right) ^{y+1}\left( \frac{\beta ^{-1}}{a^{-1}}\right)
^{by}\left( 1-x+y\right) ^{n}d\mu _{-1}\left( y\right) \\
&=&\left( -1\right) ^{n}\int_{%
%TCIMACRO{\U{2124} }%
%BeginExpansion
\mathbb{Z}
%EndExpansion
_{p}}\left( -1\right) ^{y+1}\left( \frac{\beta }{a}\right) ^{-by}\left(
x-1+y\right) ^{n}d\mu _{-1}\left( y\right) \\
&=&\left( -1\right) ^{k+n+1}\beta ^{b}a^{b}y_{n,\beta }\left( x:k,a,b\right)
\end{eqnarray*}

Thus, we complete proof of the theorem.
\end{proof}

Ozden has obtained distribution formula for $y_{n,\beta }\left(
x:k,a,b\right) .$ We will also obtain distribution formula by using $p$-adic 
$q$-integral on $%
%TCIMACRO{\U{2124} }%
%BeginExpansion
\mathbb{Z}
%EndExpansion
_{p}$ at $q=-1.$

\begin{theorem}
The following identity holds:%
\begin{equation*}
y_{n,\beta }\left( x:k,a,b\right) =a^{b\left( d-1\right)
}d^{n-k}\sum_{j=0}^{d-1}\left( \frac{\beta }{a}\right) ^{bj}y_{n,\beta
^{d}}\left( \frac{x+j}{d}:k,a^{d},b\right) .
\end{equation*}
\end{theorem}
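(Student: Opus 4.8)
The plan is to start from the fermionic integral representation of the generating function and then split the domain $\mathbb{Z}_p$ into residue classes modulo $d$, exactly as one proves a classical multiplication (Raabe) formula. First I would record the polynomial form of the representation behind \eqref{equation 9} and \eqref{equation 16}, namely
\[
F_{a,b}(x;t;k,\beta)=a^{-b}\left(\frac{t}{2}\right)^{k}\int_{\mathbb{Z}_p}(-1)^{y+1}\left(\frac{\beta}{a}\right)^{by}e^{t(x+y)}\,d\mu_{-1}(y),
\]
which is just \eqref{equation 9} with the shift $e^{tx}$ absorbed into the integrand; since the left side is $\sum_{n}y_{n,\beta}(x:k,a,b)\,t^{n}/n!$, it suffices to prove the corresponding factorization of $F_{a,b}(x;t;k,\beta)$.

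The decisive step is a distribution (additive) relation for the fermionic measure: for $(d,p)=1$ with $d$ odd,
\[
\int_{\mathbb{Z}_p}h(y)\,d\mu_{-1}(y)=\sum_{j=0}^{d-1}(-1)^{j}\int_{\mathbb{Z}_p}h(j+dy)\,d\mu_{-1}(y),
\]
which follows from the Riemann-sum description \eqref{equation 6} by writing each $z\in\{0,\dots,dp^{N}-1\}$ uniquely as $z=j+dy$ with $0\le j<d$ and using $(-1)^{j+dy}=(-1)^{j+y}$ when $d$ is odd. Applying this with $h(y)=(-1)^{y+1}(\beta/a)^{by}e^{t(x+y)}$, the twisting character factors as $(\beta/a)^{b(j+dy)}=(\beta/a)^{bj}(\beta^{d}/a^{d})^{by}$ and the exponential as $e^{t(x+j)}e^{(dt)y}$; after absorbing $(-1)^{j}(-1)^{j+y+1}=(-1)^{y+1}$, the $j$-th summand becomes
\[
\left(\frac{\beta}{a}\right)^{bj}\int_{\mathbb{Z}_p}(-1)^{y+1}\left(\frac{\beta^{d}}{a^{d}}\right)^{by}e^{dt\left(\frac{x+j}{d}+y\right)}\,d\mu_{-1}(y).
\]

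Now I would recognize this last integral as the representation with parameters $(a^{d},b,\beta^{d})$, base point $(x+j)/d$, and variable $dt$, turning the summand into $a^{db}(dt/2)^{-k}F_{a^{d},b}((x+j)/d;dt;k,\beta^{d})$. Collecting the outer constant $a^{-b}(t/2)^{k}$ from the representation and simplifying $a^{-b}a^{db}=a^{b(d-1)}$ and $(t/2)^{k}(dt/2)^{-k}=d^{-k}$ yields
\[
F_{a,b}(x;t;k,\beta)=a^{b(d-1)}d^{-k}\sum_{j=0}^{d-1}\left(\frac{\beta}{a}\right)^{bj}F_{a^{d},b}\left(\frac{x+j}{d};dt;k,\beta^{d}\right),
\]
and extracting the coefficient of $t^{n}/n!$ (the $dt$ contributing a factor $d^{n}$) gives the stated identity.

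The hard part is not the computation but the fermionic distribution relation and its parity bookkeeping of $(-1)^{dy}$: for even $d$ one has $(-1)^{dy}=1$, the sign pattern collapses, and the naive splitting fails, so either $d$ must be taken odd or the relation must be re-derived through the functional equation \eqref{equation 7}; everything else is routine tracking of the character twist $(\beta/a)^{by}$ and the constants. As an independent check that the constants are correct, the same identity falls straight out of the generating function itself: summing the geometric series
\[
\sum_{j=0}^{d-1}\left(\left(\frac{\beta}{a}\right)^{b}e^{t}\right)^{j}=\frac{\beta^{bd}e^{dt}-a^{bd}}{a^{b(d-1)}(\beta^{b}e^{t}-a^{b})}
\]
against $F_{a^{d},b}((x+j)/d;dt;k,\beta^{d})$ telescopes the denominator $\beta^{bd}e^{dt}-a^{bd}$ and reproduces $F_{a,b}(x;t;k,\beta)$ up to exactly the factor $a^{b(d-1)}d^{-k}$, and this purely formal route has the advantage of holding for every $d$ without a parity hypothesis.
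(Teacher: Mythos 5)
Your proof is correct, and its engine is the same as the paper's: both decompose the fermionic integral over the residue classes $j+d\mathbb{Z}_{p}$, $0\leq j<d$, pull out the twist $\left( \frac{\beta }{a}\right) ^{bj}$, rescale so that the inner integral is recognized as the one attached to the parameters $\left( a^{d},b,\beta ^{d}\right) $ at the point $\frac{x+j}{d}$, and then identify coefficients. The differences are still worth recording. First, the paper performs the splitting on the moments $\left( x+y\right) ^{n}$ via \eqref{equation 16} and finishes by replacing $n$ by $n-k$, whereas you perform it on the exponential integrand $e^{t\left( x+y\right) }$ and extract the coefficient of $t^{n}/n!$ at the end; this is cosmetic. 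Second, you isolate the splitting as an explicit distribution lemma for $\mu _{-1}$ with the hypothesis that $d$ be odd; the paper uses the same fact silently when it writes $\int_{\mathbb{Z}_{p}}f\left( y\right) d\mu _{-1}\left( y\right) =\lim_{N\rightarrow \infty }\sum_{y=0}^{dp^{N}-1}f\left( y\right) \left( -1\right) ^{y}$, a step that genuinely fails for even $d$ (already for $f=1$ the two limits are $1$ and $0$), so your parity caveat makes explicit a hypothesis the paper's proof needs but never states. Third, your closing geometric-series argument is a genuinely different proof: it uses no $p$-adic integration at all, holds for every $d$ without a parity restriction, and is essentially Ozden's original derivation, which the paper itself acknowledges in the remark following the theorem; it is also the cleanest way to certify the constants $a^{b\left( d-1\right) }d^{n-k}$.
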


\begin{proof}
By using definition of the $p$-adic integral on $%
%TCIMACRO{\U{2124} }%
%BeginExpansion
\mathbb{Z}
%EndExpansion
_{p},$ we compute%
\begin{eqnarray*}
2^{k}a^{b}\frac{n!}{\left( n+k\right) !}y_{n+k,\beta }\left( x:k,a,b\right)
&=&\int_{%
%TCIMACRO{\U{2124} }%
%BeginExpansion
\mathbb{Z}
%EndExpansion
_{p}}\left( -1\right) ^{y+1}\left( \frac{\beta }{a}\right) ^{by}\left(
x+y\right) ^{n}d\mu _{-1}\left( y\right) \\
&=&\lim_{N\rightarrow \infty }\sum_{y=0}^{dp^{N}-1}\left( -1\right)
^{y+1}\left( \frac{\beta }{a}\right) ^{by}\left( x+y\right) ^{n}\left(
-1\right) ^{y} \\
&=&d^{n}\sum_{j=0}^{d-1}\left( \frac{\beta }{a}\right)
^{bj}\lim_{N\rightarrow \infty }\sum_{y=0}^{p^{N}-1}\left( -1\right)
^{y+1}\left( \frac{\beta }{a}\right) ^{bdy}\left( \frac{x+j}{d}+y\right)
^{n}\left( -1\right) ^{y} \\
&=&d^{n}\sum_{j=0}^{d-1}\left( \frac{\beta }{a}\right) ^{bj}\int_{%
%TCIMACRO{\U{2124} }%
%BeginExpansion
\mathbb{Z}
%EndExpansion
_{p}}\left( -1\right) ^{y+1}\left( \frac{\beta ^{d}}{a^{d}}\right)
^{by}\left( \frac{x+j}{d}+y\right) ^{n}d\mu _{-1}\left( y\right) \\
&=&d^{n}\sum_{j=0}^{d-1}\left( \frac{\beta }{a}\right) ^{bj}2^{k}a^{db}\frac{%
n!}{\left( n+k\right) !}y_{n+k,\beta ^{d}}\left( \frac{x+j}{d}%
:k,a^{d},b\right)
\end{eqnarray*}

Substituting $n$ by $n-k$, we will be completed the proof of theorem.
\end{proof}

\begin{remark}
This distribution for $y_{n,\beta }\left( x:k,a,b\right) $ is also
introduced by Ozden cf.\cite{ozden 2}.
\end{remark}

\begin{definition}
(see, for detail \cite{Ozden 1})Let $\chi $ be a Dirichlet character with
conductor $d\in 
%TCIMACRO{\U{2115} }%
%BeginExpansion
\mathbb{N}
%EndExpansion
.$ The generating functions of the generalized Bernoulli, Euler and Genocchi
polynomials with parameters $a,$ $b,$ $\beta $ and $k$ have been defined by
Ozden, Simsek and Srivastava as follows:{}%
\begin{eqnarray}
&&\tciFourier _{\chi ,\beta }\left( t,k,a,b\right)  \label{equation 17} \\
&=&2\left( \frac{t}{2}\right) ^{k}\sum_{j=1}^{d}\frac{\chi \left( j\right)
\left( \frac{\beta }{a}\right) ^{j}e^{jt}}{\beta ^{bd}e^{dt}-a^{bd}}  \notag
\\
&=&\sum_{n=0}^{\infty }y_{n,\chi ,\beta }\left( x:k,a,b\right) \frac{t^{n}}{%
n!},\text{ }\left( \left\vert t+b\log \left( \frac{\beta }{a}\right)
\right\vert <2\pi ;\text{ }d,k\in 
%TCIMACRO{\U{2115} }%
%BeginExpansion
\mathbb{N}
%EndExpansion
;\text{ }a,b\in 
%TCIMACRO{\U{211d} }%
%BeginExpansion
\mathbb{R}
%EndExpansion
^{+};\text{ }\beta \in 
%TCIMACRO{\U{2102} }%
%BeginExpansion
\mathbb{C}
%EndExpansion
\right)  \notag
\end{eqnarray}
\end{definition}

\hspace*{0pt}By using $p$-adic integral on $%
%TCIMACRO{\U{2124} }%
%BeginExpansion
\mathbb{Z}
%EndExpansion
_{p},$ we can obtain (\ref{equation 17}){} with the following theorem:

\begin{theorem}
Let $\chi $ be a Dirichlet's character with conductor $d\in 
%TCIMACRO{\U{2115} }%
%BeginExpansion
\mathbb{N}
%EndExpansion
.$ Then the following relation holds%
\begin{equation}
a^{b\left( 1-d\right) }\left( \frac{t}{2}\right) ^{k}\int_{%
%TCIMACRO{\U{2124} }%
%BeginExpansion
\mathbb{Z}
%EndExpansion
_{p}}\chi \left( x\right) \left( -1\right) ^{x+1}\left( \frac{\beta }{a}%
\right) ^{bx}e^{tx}d\mu _{-1}\left( x\right) =2^{1-k}t^{k}\sum_{j=1}^{d}%
\frac{\chi \left( j\right) \left( \frac{\beta }{a}\right) ^{bj}e^{tj}}{\beta
^{db}e^{dt}-a^{db}}  \label{equation 32}
\end{equation}

\begin{proof}
From the definition of $p$-adic $q$-integral on $%
%TCIMACRO{\U{2124} }%
%BeginExpansion
\mathbb{Z}
%EndExpansion
_{p}$ at $q=-1,$ we compute%
\begin{eqnarray*}
&&a^{b\left( 1-d\right) }\left( \frac{t}{2}\right) ^{k}\int_{%
%TCIMACRO{\U{2124} }%
%BeginExpansion
\mathbb{Z}
%EndExpansion
_{p}}\chi \left( x\right) \left( -1\right) ^{x+1}\left( \frac{\beta }{a}%
\right) ^{bx}e^{tx}d\mu _{-1}\left( x\right) \\
&=&a^{b\left( 1-d\right) }\left( \frac{t}{2}\right) ^{k}\lim_{N\rightarrow
\infty }\sum_{x=0}^{dp^{N}-1}\chi \left( x\right) \left( -1\right)
^{x+1}\left( \frac{\beta }{a}\right) ^{bx}e^{tx}\left( -1\right) ^{x} \\
&=&\frac{1}{d^{k}}\sum_{j=1}^{d}\chi \left( j\right) \left( \frac{\beta }{a}%
\right) ^{bj}e^{tj}\left( \frac{1}{a^{db}}\left( \frac{td}{2}\right)
^{k}\lim_{N\rightarrow \infty }\sum_{x=0}^{p^{N}-1}\left( -1\right)
^{x+1}\left( \frac{\beta ^{d}}{a^{d}}\right) ^{bx}e^{tdx}\left( -1\right)
^{x}\right) \\
&=&\frac{1}{d^{k}}\sum_{j=1}^{d}\chi \left( j\right) \left( \frac{\beta }{a}%
\right) ^{bj}e^{tj}\left( \frac{2\left( \frac{td}{2}\right) ^{k}}{\beta
^{db}e^{dt}-a^{db}}\right) \\
&=&2^{1-k}t^{k}\sum_{j=1}^{d}\frac{\chi \left( j\right) \left( \frac{\beta }{%
a}\right) ^{bj}e^{tj}}{\beta ^{db}e^{dt}-a^{db}}
\end{eqnarray*}%
Thus, we arrive at the desired result.
\end{proof}
\end{theorem}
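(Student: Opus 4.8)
The plan is to evaluate the left-hand side straight from the Riemann-sum definition of the fermionic integral and then to reorganize that sum by residue classes modulo the conductor $d$, exactly as in the proof of the distribution formula already carried out above. First I would invoke \eqref{equation 6} over $X=X_d$ to write
\begin{equation*}
\int_{\mathbb{Z}_p}\chi(x)(-1)^{x+1}\left(\frac{\beta}{a}\right)^{bx}e^{tx}d\mu_{-1}(x)=\lim_{N\to\infty}\sum_{x=0}^{dp^N-1}\chi(x)(-1)^{x+1}\left(\frac{\beta}{a}\right)^{bx}e^{tx}(-1)^x,
\end{equation*}
and then write each $x$ with $0\le x<dp^N$ uniquely as $x=j+dy$ where $0\le j<d$ and $0\le y<p^N$. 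Because $\chi$ has conductor $d$ we have $\chi(j+dy)=\chi(j)$, which pulls out of the inner sum; at the same time $\left(\frac{\beta}{a}\right)^{b(j+dy)}=\left(\frac{\beta}{a}\right)^{bj}\left(\frac{\beta^d}{a^d}\right)^{by}$ and $e^{t(j+dy)}=e^{tj}e^{tdy}$ factor into a $j$-dependent outer constant times a $y$-dependent inner term.

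The step that makes this work is the behaviour of the two sign factors under the substitution. I would observe that $(-1)^{(j+dy)+1}(-1)^{j+dy}=(-1)^{2(j+dy)+1}=-1$, independent of the parities of $j$, $d$ and $y$, and equal to $(-1)^{y+1}(-1)^y$. This is precisely the device that let the earlier distribution proof avoid any hypothesis on the parity of $d$: the fermionic weight is regenerated in the new variable. Consequently the inner sum may be written as $\sum_{y=0}^{p^N-1}(-1)^{y+1}\left(\frac{\beta^d}{a^d}\right)^{by}e^{tdy}(-1)^y$, and letting $N\to\infty$ identifies it with $\int_{\mathbb{Z}_p}(-1)^{y+1}(\beta^d/a^d)^{by}e^{tdy}d\mu_{-1}(y)$. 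I would evaluate this in closed form by applying Theorem 1, i.e.\ \eqref{equation 9}, after the substitutions $\beta\mapsto\beta^d$, $a\mapsto a^d$, $t\mapsto td$, which gives
\begin{equation*}
\int_{\mathbb{Z}_p}(-1)^{y+1}\left(\frac{\beta^d}{a^d}\right)^{by}e^{tdy}d\mu_{-1}(y)=\frac{2a^{db}}{\beta^{db}e^{dt}-a^{db}}.
\end{equation*}

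Finally I would reassemble the outer sum. Summing the $j$-dependent constants gives $\sum_{j=0}^{d-1}\chi(j)(\beta/a)^{bj}e^{tj}$, which equals $\sum_{j=1}^{d}\chi(j)(\beta/a)^{bj}e^{tj}$ because $\chi(0)=\chi(d)=0$, and multiplying through by the prefactor $a^{b(1-d)}(t/2)^k$ leaves only a constant to simplify. The hard part will be the constant bookkeeping: one must combine the $a^{db}$ produced by the inner integral with the prefactor's powers of $a$, and combine $(t/2)^k$ with the factor $(td/2)^k=(t/2)^k d^k$ introduced by the scaling $t\mapsto td$, so that the $d^k$ cancels and the powers of $2$ collapse to $2^{1-k}t^k$. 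Tracking the powers of $a$ through this collapse is the delicate point, and it is where I would concentrate the verification to confirm that the right-hand side of \eqref{equation 32} is reproduced exactly.
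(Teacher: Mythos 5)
Your route is exactly the paper's own: expand the fermionic integral as a Riemann sum over $0\leq x<dp^{N}$, split $x=j+dy$, use $\left( -1\right) ^{x+1}\left( -1\right) ^{x}=-1=\left( -1\right) ^{y+1}\left( -1\right) ^{y}$ to regenerate the fermionic weight in the new variable, and evaluate the inner limit by \eqref{equation 9} under $\beta \mapsto \beta ^{d}$, $a\mapsto a^{d}$, $t\mapsto td$, which indeed gives $\int_{\mathbb{Z}_{p}}\left( -1\right) ^{y+1}\left( \beta ^{d}/a^{d}\right) ^{by}e^{tdy}d\mu _{-1}\left( y\right) =2a^{db}/\left( \beta ^{db}e^{dt}-a^{db}\right) $. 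Every step you wrote out is correct. However, the step you defer --- ``tracking the powers of $a$'' --- is a genuine gap, and it cannot be closed as you hope, because \eqref{equation 32} is not what your pieces assemble to. Putting them together,
\begin{equation*}
a^{b\left( 1-d\right) }\left( \frac{t}{2}\right) ^{k}\int_{\mathbb{Z}_{p}}\chi \left( x\right) \left( -1\right) ^{x+1}\left( \frac{\beta }{a}\right) ^{bx}e^{tx}d\mu _{-1}\left( x\right) =a^{b\left( 1-d\right) }\,a^{db}\,2^{1-k}t^{k}\sum_{j=1}^{d}\frac{\chi \left( j\right) \left( \frac{\beta }{a}\right) ^{bj}e^{tj}}{\beta ^{db}e^{dt}-a^{db}},
\end{equation*}
and $a^{b\left( 1-d\right) }a^{db}=a^{b}$, not $1$. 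So your argument, carried to completion, shows that the left-hand side of \eqref{equation 32} equals $a^{b}$ times its right-hand side; the printed identity holds only in the degenerate case $a^{b}=1$.

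The fault here is not in your method but in the paper: its proof makes exactly this slip. Between the second and third displayed lines of the paper's computation, the prefactor $a^{b\left( 1-d\right) }\left( \frac{t}{2}\right) ^{k}$ is silently replaced by $\frac{1}{d^{k}}\cdot \frac{1}{a^{db}}\left( \frac{td}{2}\right) ^{k}=a^{-db}\left( \frac{t}{2}\right) ^{k}$, i.e.\ a factor $a^{b}$ is dropped. The statement that your decomposition (and the paper's) actually proves is \eqref{equation 32} with left-hand prefactor $a^{-bd}\left( \frac{t}{2}\right) ^{k}$ in place of $a^{b\left( 1-d\right) }\left( \frac{t}{2}\right) ^{k}$, or equivalently with an extra factor $a^{b}$ on the right-hand side. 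You should either prove that corrected statement or flag the discrepancy; as written, your plan to ``confirm that the right-hand side of \eqref{equation 32} is reproduced exactly'' cannot succeed. A minor further point: the $d^{k}$-cancellation you anticipate never occurs in your formulation, since you apply \eqref{equation 9} with the $\left( t/2\right) ^{k}$ factors already cleared; that piece of bookkeeping belongs to the paper's arrangement of the computation, not to yours.
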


{}\bigskip By expression of (\ref{equation 32}), we get the following
equation{}{}%
\begin{equation}
a^{b\left( 1-d\right) }\left( \frac{t}{2}\right) ^{k}\int_{%
%TCIMACRO{\U{2124} }%
%BeginExpansion
\mathbb{Z}
%EndExpansion
_{p}}\chi \left( x\right) \left( -1\right) ^{x+1}\left( \frac{\beta }{a}%
\right) ^{bx}e^{tx}d\mu _{-1}\left( x\right) =\sum_{n=0}^{\infty }y_{n,\chi
,\beta }\left( x:k,a,b\right) \frac{t^{n}}{n!}.  \label{equation 30}
\end{equation}

We are now ready to give distribution formula for generalized Euler and
Genocchi polynomials by using $p$-adic $q$-integral on $%
%TCIMACRO{\U{2124} }%
%BeginExpansion
\mathbb{Z}
%EndExpansion
_{p}$ at $q=-1$ by means of theorem.

\begin{theorem}
For any $n,k,d\in 
%TCIMACRO{\U{2115} }%
%BeginExpansion
\mathbb{N}
%EndExpansion
$ $a,b\in 
%TCIMACRO{\U{211d} }%
%BeginExpansion
\mathbb{R}
%EndExpansion
^{+};$ $\beta \in 
%TCIMACRO{\U{2102} }%
%BeginExpansion
\mathbb{C}
%EndExpansion
,$ we have%
\begin{equation*}
y_{n,\chi ,\beta }\left( x:k,a,b\right) =d^{n-k}\sum_{j=0}^{d-1}\chi \left(
j\right) \left( \frac{\beta }{a}\right) ^{bj}y_{n,\beta ^{d}}\left( \frac{x+j%
}{d}:k,a^{d},b\right) .
\end{equation*}

\begin{proof}
By expression of (\ref{equation 30}), we compute as follows:%
\begin{eqnarray*}
&&\sum_{n=0}^{\infty }y_{n,\chi ,\beta }\left( x:k,a,b\right) \frac{t^{n}}{n!%
} \\
&=&a^{b\left( 1-d\right) }\left( \frac{t}{2}\right) ^{k}\int_{%
%TCIMACRO{\U{2124} }%
%BeginExpansion
\mathbb{Z}
%EndExpansion
_{p}}\chi \left( y\right) \left( -1\right) ^{y+1}\left( \frac{\beta }{a}%
\right) ^{by}e^{t\left( x+y\right) }d\mu _{-1}\left( y\right) \\
&=&a^{b\left( 1-d\right) }\left( \frac{t}{2}\right) ^{k}\lim_{N\rightarrow
\infty }\sum_{y=0}^{dp^{N}-1}\chi \left( y\right) \left( -1\right)
^{y+1}\left( \frac{\beta }{a}\right) ^{by}e^{t\left( x+y\right) }\left(
-1\right) ^{y} \\
&=&\frac{1}{d^{k}}\sum_{j=0}^{d-1}\chi \left( j\right) \left( \frac{\beta }{a%
}\right) ^{bj}\left( \frac{1}{a^{db}}\left( \frac{dt}{2}\right)
^{k}\lim_{N\rightarrow \infty }\sum_{y=0}^{p^{N}-1}\left( -1\right)
^{y+1}\left( \frac{\beta ^{d}}{a^{d}}\right) ^{by}e^{dt\left( \frac{x+j}{d}%
+y\right) }\left( -1\right) ^{y}\right) \\
&=&\frac{1}{d^{k}}\sum_{j=0}^{d-1}\chi \left( j\right) \left( \frac{\beta }{a%
}\right) ^{bj}\left( \frac{1}{a^{db}}\left( \frac{dt}{2}\right) ^{k}\int_{%
%TCIMACRO{\U{2124} }%
%BeginExpansion
\mathbb{Z}
%EndExpansion
_{p}}\left( -1\right) ^{y+1}\left( \frac{\beta ^{d}}{a^{d}}\right)
^{by}e^{dt\left( \frac{x+j}{d}+y\right) }d\mu _{-1}\left( y\right) \right) \\
&=&\frac{1}{d^{k}}\sum_{j=0}^{d-1}\chi \left( j\right) \left( \frac{\beta }{a%
}\right) ^{bj}\left( \sum_{n=0}^{\infty }d^{n}y_{n,\beta ^{d}}\left( \frac{%
x+j}{d}:k,a^{d},b\right) \frac{t^{n}}{n!}\right) \\
&=&\sum_{n=0}^{\infty }\left( d^{n-k}\sum_{j=0}^{d-1}\chi \left( j\right)
\left( \frac{\beta }{a}\right) ^{bj}y_{n,\beta ^{d}}\left( \frac{x+j}{d}%
:k,a^{d},b\right) \right) \frac{t^{n}}{n!}.
\end{eqnarray*}%
So, we complete the proof of theorem.
\end{proof}
\end{theorem}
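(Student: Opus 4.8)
The plan is to obtain this distribution relation from the integral representation (\ref{equation 30}) by decomposing the fermionic $p$-adic integral over $X_{d}$ into residue classes modulo the conductor $d$, in exact parallel with the characterless distribution theorem proved above. First I would take (\ref{equation 30}) with $e^{tx}$ replaced by $e^{t(x+y)}$ (integrating in the variable $y$), so that its left-hand side is the generating function $\sum_{n}y_{n,\chi,\beta}(x:k,a,b)\,t^{n}/n!$, and I would expand the integral as the defining limit of fermionic Riemann sums
\begin{equation*}
a^{b(1-d)}\left(\tfrac{t}{2}\right)^{k}\lim_{N\to\infty}\sum_{y=0}^{dp^{N}-1}\chi(y)(-1)^{y+1}\left(\tfrac{\beta}{a}\right)^{by}e^{t(x+y)}(-1)^{y}.
\end{equation*}

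Next I would run $y$ through the bijection $y=j+dy'$ with $j=0,1,\dots,d-1$ and $0\le y'<p^{N}$. Because $\chi$ has conductor $d$ we have $\chi(j+dy')=\chi(j)$, which pulls $\chi(j)(\beta/a)^{bj}$ out of the inner sum; simultaneously $(\beta/a)^{bdy'}$ becomes $(\beta^{d}/a^{d})^{by'}$, the exponent regroups as $e^{dt((x+j)/d+y')}$, and $(t/2)^{k}$ rescales as $d^{-k}(dt/2)^{k}$. A pleasant simplification here is that the explicit $(-1)^{y+1}$ multiplies the $(-1)^{y}$ coming from $d\mu_{-1}$ to the constant $-1$ for every $y$, so the residue decomposition respects the measure with no parity hypothesis on $d$. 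I would then recognise the inner limit as precisely the fermionic integral attached to the parameters $(\beta^{d},a^{d})$ at the dilated argument $(x+j)/d$, and apply the characterless relation (\ref{equation 16}), the polynomial form of (\ref{equation 9}), to rewrite it as $\sum_{n}d^{n}\,y_{n,\beta^{d}}((x+j)/d:k,a^{d},b)\,t^{n}/n!$, the factor $d^{n}$ being produced by the dilation $t\mapsto dt$ in the exponential.

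Finally I would assemble the surviving constants and compare coefficients of $t^{n}/n!$ on the two sides, the aim being that the $d^{-k}$ from the rescaled $k$-th power and the $d^{n}$ from the dilation combine into the single factor $d^{n-k}$. I expect the main obstacle to be precisely this prefactor bookkeeping, and in particular showing that all the powers of $a$ telescope away: one must verify that the outer $a^{b(1-d)}$ exactly cancels the power of $a$ carried by the $(\beta^{d}/a^{d})$-normalisation when the inner integral is converted back into the generating function for $y_{n,\beta^{d}}$, leaving no residual factor of $a$ in front of $d^{n-k}$. Keeping careful track of these $a$- and $d$-exponents through the substitution is the delicate step; once it is checked, matching the $t^{n}$-coefficients yields the stated identity.
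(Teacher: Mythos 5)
Your proposal is correct and follows essentially the same route as the paper's own proof: expanding the integral representation (\ref{equation 30}) as a fermionic Riemann sum over $0\le y<dp^{N}$, splitting $y=j+dy'$ by residue classes modulo the conductor $d$, recognising the inner limit as the fermionic integral for the parameters $\left(\beta^{d},a^{d}\right)$ at argument $\frac{x+j}{d}$ with $t\mapsto dt$, and comparing coefficients of $\frac{t^{n}}{n!}$ to combine $d^{-k}$ and $d^{n}$ into $d^{n-k}$. Your two explicit cautions --- that $\left(-1\right)^{y+1}\left(-1\right)^{y}=-1$ removes any parity hypothesis on $d$, and that the powers of $a$ (the outer $a^{b\left(1-d\right)}$ against the inner $a^{-db}$ normalisation) must be checked to telescope --- are exactly the points the paper's computation passes over silently, so your version is if anything the more careful one.
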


\section{\protect\bigskip {}New properties on the unification of multiple
twisted Euler and Genocchi polynomials}

In this section, we introduce a unification of the twisted Euler and
Genocchi polynomials. We assume that $q\in 
%TCIMACRO{\U{2102} }%
%BeginExpansion
\mathbb{C}
%EndExpansion
_{p}$ with $\left\vert 1-q\right\vert _{p}<1.$ For $n\in 
%TCIMACRO{\U{2115} }%
%BeginExpansion
\mathbb{N}
%EndExpansion
,$ by the definition of the $p$-adic integral on $%
%TCIMACRO{\U{2124} }%
%BeginExpansion
\mathbb{Z}
%EndExpansion
_{p},$ we have%
\begin{equation}
I_{-1}\left( f_{n}\right) +\left( -1\right) ^{n-1}I_{-1}\left( f\right)
=2\sum_{x=0}^{n-1}f\left( x\right) \left( -1\right) ^{n-1-x}
\label{equation 18}
\end{equation}

where $f_{n}\left( x\right) =f\left( x+n\right) .$

Let $T_{p}=\underset{n\geq 1}{\cup }C_{p^{n}}=\lim_{n\rightarrow \infty
}C_{p^{n}}=C_{p^{\infty }}$ be the locally constant space, where $%
C_{p^{n}}=\left\{ w\mid w^{p^{n}}=1\right\} $ is the cylic group of order $%
p^{n}.$ For $w\in T_{p},$ we denote the locally constant function by 
\begin{equation}
\phi _{w}:%
%TCIMACRO{\U{2124} }%
%BeginExpansion
\mathbb{Z}
%EndExpansion
_{p}\rightarrow 
%TCIMACRO{\U{2102} }%
%BeginExpansion
\mathbb{C}
%EndExpansion
_{p},\text{ }x\rightarrow w^{x},\text{ }  \label{equation 19}
\end{equation}

If we set $f\left( x\right) =\phi _{w}\left( x\right) a^{-b}\left( \frac{t}{2%
}\right) ^{k}\left( -1\right) ^{x+1}\left( \frac{\beta }{a}\right)
^{bx}e^{tx},$ then we have 
\begin{equation}
a^{-b}\left( \frac{t}{2}\right) ^{k}\int_{%
%TCIMACRO{\U{2124} }%
%BeginExpansion
\mathbb{Z}
%EndExpansion
_{p}}\phi _{w}\left( x\right) \left( -1\right) ^{x+1}\left( \frac{\beta }{a}%
\right) ^{bx}e^{tx}d\mu _{-1}\left( x\right) =\frac{2\left( \frac{t}{2}%
\right) ^{k}}{w\beta ^{b}e^{t}-a^{b}}  \label{equation 20}
\end{equation}

We now define unification of twisted Euler and Genocchi polynomials as
follows:%
\begin{equation*}
\frac{2\left( \frac{t}{2}\right) ^{k}}{w\beta ^{b}e^{t}-a^{b}}%
=\sum_{n=0}^{\infty }y_{n,w,\beta }\left( k,a,b\right) \frac{t^{n}}{n!},
\end{equation*}

We note that by substituting $w=1,$ we obtain Ozden's generating function (%
\ref{equation 8}). From (\ref{equation 19}) and (\ref{equation 20}), we
obtain witt's type formula for a unificaton of twisted Euler and Genocchi
polynomials as follows:%
\begin{equation}
a^{-b}2^{-k}\int_{%
%TCIMACRO{\U{2124} }%
%BeginExpansion
\mathbb{Z}
%EndExpansion
_{p}}\phi _{w}\left( x\right) \left( -1\right) ^{x+1}\left( \frac{\beta }{a}%
\right) ^{bx}x^{n}d\mu _{-1}\left( x\right) =\frac{y_{n+k,w,\beta }\left(
k,a,b\right) }{k!\binom{n+k}{k}}  \label{equation 21}
\end{equation}

for each $w\in T_{p}$ and $n\in 
%TCIMACRO{\U{2115} }%
%BeginExpansion
\mathbb{N}
%EndExpansion
.$

We now establish Witt's type formula for the unification of multiple twisted
Euler and Genocchi polynomials by the following theorem.

\begin{definition}
Let be $w\in T_{p},$ $n,h,k\in 
%TCIMACRO{\U{2115} }%
%BeginExpansion
\mathbb{N}
%EndExpansion
$ $a,b\in 
%TCIMACRO{\U{211d} }%
%BeginExpansion
\mathbb{R}
%EndExpansion
^{+};$ $\beta \in 
%TCIMACRO{\U{2102} }%
%BeginExpansion
\mathbb{C}
%EndExpansion
,$ we define%
\begin{eqnarray}
&&a^{-hb}2^{-hk}\underset{h-times}{\underbrace{\int_{%
%TCIMACRO{\U{2124} }%
%BeginExpansion
\mathbb{Z}
%EndExpansion
_{p}}...\int_{%
%TCIMACRO{\U{2124} }%
%BeginExpansion
\mathbb{Z}
%EndExpansion
_{p}}}}\phi _{w}\left( x_{1}+...+x_{h}\right) \left( -1\right)
^{x_{1}+...+x_{h}+h}  \label{equation 22} \\
&&\times \left( \frac{\beta }{a}\right) ^{b\left( x_{1}+...+x_{h}\right)
}\left( x_{1}+...+x_{h}\right) ^{n}d\mu _{-1}\left( x_{1}\right) ...d\mu
_{-1}\left( x_{h}\right)  \notag \\
&=&\frac{y_{n+kh,w,\beta }^{\left( h\right) }\left( k,a,b\right) }{\left(
kh\right) !\binom{n+kh}{kh}}.  \notag
\end{eqnarray}
\end{definition}

\begin{remark}
Taking $h=1$ into (\ref{equation 22}), we get the unification of the twisted
Euler and Genocchi polynomials $y_{n,w,\beta }\left( k,a,b\right) .$
\end{remark}

\begin{remark}
By substituting $h=1$ and $w=1,$ we obtain a special case of the unification
of Euler and Genocchi polynomials $y_{n,\beta }\left( k,a,b\right) .$
\end{remark}

\begin{theorem}
For any $w\in T_{p},$ $n,h,k\in 
%TCIMACRO{\U{2115} }%
%BeginExpansion
\mathbb{N}
%EndExpansion
$ $a,b\in 
%TCIMACRO{\U{211d} }%
%BeginExpansion
\mathbb{R}
%EndExpansion
^{+};$ $\beta \in 
%TCIMACRO{\U{2102} }%
%BeginExpansion
\mathbb{C}
%EndExpansion
,$ 
\begin{equation*}
\frac{y_{n+kh,w,\beta }^{\left( h\right) }\left( k,a,b\right) }{\left(
kh\right) !\binom{n+kh}{kh}}=\sum_{\underset{l_{1},...,l_{h}\geq 0}{%
l_{1}+...+l_{h}=n}}\frac{n!}{l_{1}!...l_{h}!}\dprod\limits_{i=1}^{h}\frac{%
y_{l_{i}+kh,w,\beta }^{\left( h\right) }\left( k,a,b\right) }{\left(
kh\right) !\binom{l_{i}+kh}{kh}}
\end{equation*}

\begin{proof}
By using definition of the multiple twisted a unification of Euler and
Genocchi numbers and polynomials, and, definition of $\left(
x_{1}+x_{2}+...+x_{h}\right) ^{n}=\sum_{\underset{l_{1},...,l_{h}\geq 0}{%
l_{1}+...+l_{h}=n}}\frac{n!}{l_{1}!...l_{h}!}%
x_{1}^{l_{1}}x_{2}^{l_{2}}...x_{h}^{l_{h}},$ we see that,%
\begin{eqnarray*}
&&a^{-hb}2^{-hk}\underset{h-times}{\underbrace{\int_{%
%TCIMACRO{\U{2124} }%
%BeginExpansion
\mathbb{Z}
%EndExpansion
_{p}}...\int_{%
%TCIMACRO{\U{2124} }%
%BeginExpansion
\mathbb{Z}
%EndExpansion
_{p}}}}\phi _{w}\left( x_{1}+...+x_{h}\right) \left( -1\right)
^{x_{1}+...+x_{h}+h}\left( \frac{\beta }{a}\right) ^{b\left(
x_{1}+...+x_{h}\right) } \\
&&\times \left( x+x_{1}+...+x_{h}\right) ^{n}d\mu _{-1}\left( x_{1}\right)
...d\mu _{-1}\left( x_{h}\right) \\
&=&\sum_{\underset{l_{1},...,l_{h}\geq 0}{l_{1}+...+l_{h}=n}}\frac{n!}{%
l_{1}!...l_{h}!}\left( a^{-b}2^{-k}\int_{%
%TCIMACRO{\U{2124} }%
%BeginExpansion
\mathbb{Z}
%EndExpansion
_{p}}w^{x_{1}}\left( \frac{\beta }{a}\right) ^{bx_{1}}x_{1}^{l_{1}}d\mu
_{-1}\left( x_{1}\right) \right) \times \\
&&...\times \left( a^{-b}2^{-k}\int_{%
%TCIMACRO{\U{2124} }%
%BeginExpansion
\mathbb{Z}
%EndExpansion
_{p}}w^{x_{h}}\left( \frac{\beta }{a}\right) ^{bx_{h}}x_{h}^{l_{h}}d\mu
_{-1}\left( x_{h}\right) \right) \\
&=&\sum_{\underset{l_{1},...,l_{h}\geq 0}{l_{1}+...+l_{h}=n}}\frac{n!}{%
l_{1}!...l_{h}!}\dprod\limits_{j=1}^{h}\frac{y_{l_{i}+kh,w,\beta }^{\left(
h\right) }\left( k,a,b\right) }{\left( kh\right) !\binom{l_{i}+kh}{kh}}
\end{eqnarray*}%
Thus, we arrive at the desired result.
\end{proof}
\end{theorem}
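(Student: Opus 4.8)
The plan is to recognize the left-hand side as the defining $h$-fold $p$-adic integral of (\ref{equation 22}) and then collapse that iterated integral into a product of $h$ one-dimensional integrals, each of which is already computed by the Witt-type formula (\ref{equation 21}). First I would insert the multinomial expansion
\[
\left( x_{1}+\cdots +x_{h}\right) ^{n}=\sum_{\underset{l_{1}+\cdots +l_{h}=n}{l_{1},\ldots ,l_{h}\geq 0}}\frac{n!}{l_{1}!\cdots l_{h}!}\,x_{1}^{l_{1}}\cdots x_{h}^{l_{h}}
\]
into the integrand appearing in (\ref{equation 22}). Because this is a finite sum, it may be interchanged with all $h$ integrals with no convergence issue, so the task reduces to evaluating, for each fixed tuple $(l_{1},\ldots ,l_{h})$, the integral of $\phi _{w}\left( x_{1}+\cdots +x_{h}\right) \left( -1\right) ^{x_{1}+\cdots +x_{h}+h}\left( \tfrac{\beta }{a}\right) ^{b\left( x_{1}+\cdots +x_{h}\right) }x_{1}^{l_{1}}\cdots x_{h}^{l_{h}}$ against $d\mu _{-1}(x_{1})\cdots d\mu _{-1}(x_{h})$.

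The crux is the factorization step. The prefactor separates as $a^{-hb}2^{-hk}=\prod_{i=1}^{h}a^{-b}2^{-k}$, and each remaining factor is completely multiplicative in the variables: $\phi _{w}\left( x_{1}+\cdots +x_{h}\right) =\prod_{i=1}^{h}w^{x_{i}}$ by the character property underlying (\ref{equation 19}), while $\left( -1\right) ^{x_{1}+\cdots +x_{h}+h}=\prod_{i=1}^{h}\left( -1\right) ^{x_{i}+1}$ and $\left( \tfrac{\beta }{a}\right) ^{b\left( x_{1}+\cdots +x_{h}\right) }=\prod_{i=1}^{h}\left( \tfrac{\beta }{a}\right) ^{bx_{i}}$. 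Since the iterated measure $d\mu _{-1}(x_{1})\cdots d\mu _{-1}(x_{h})$ runs over independent copies of $\mathbb{Z}_{p}$, the integral of this product of one-variable functions separates into the product of the individual integrals, the $i$-th of which is exactly
\[
a^{-b}2^{-k}\int_{\mathbb{Z}_{p}}w^{x_{i}}\left( -1\right) ^{x_{i}+1}\left( \tfrac{\beta }{a}\right) ^{bx_{i}}x_{i}^{l_{i}}\,d\mu _{-1}\left( x_{i}\right) .
\]

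Finally I would invoke the Witt-type identity (\ref{equation 21}) to rewrite each such factor in closed form as $\dfrac{y_{l_{i}+k,w,\beta }\left( k,a,b\right) }{k!\binom{l_{i}+k}{k}}$, and then reassemble the finite multinomial sum to reach the claimed identity. I expect the factorization to be the only genuine obstacle: it hinges on the exponential identity $w^{x+y}=w^{x}w^{y}$ for $\phi _{w}$ together with the full multiplicativity of both the sign factor and of $\left( \beta /a\right) ^{b(\cdot )}$, which is precisely why the non-multiplicative factor $\left( x_{1}+\cdots +x_{h}\right) ^{n}$ must be linearized by the multinomial theorem beforehand. Once every factor in the integrand depends on a single variable, the splitting of the iterated $\mu _{-1}$-integral is immediate and the remaining manipulation is routine bookkeeping with the multinomial coefficients.
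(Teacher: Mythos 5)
Your proposal is correct and follows the paper's own route step for step: interpret the left-hand side as the $h$-fold fermionic integral defining $y^{\left( h\right) }_{n+kh,w,\beta }\left( k,a,b\right) $ in (\ref{equation 22}), linearize $\left( x_{1}+\cdots +x_{h}\right) ^{n}$ by the multinomial theorem, split the iterated integral into $h$ one-dimensional integrals using the complete multiplicativity of $\phi _{w}$, of the sign factor $\left( -1\right) ^{x_{1}+\cdots +x_{h}+h}=\prod_{i=1}^{h}\left( -1\right) ^{x_{i}+1}$, and of $\left( \beta /a\right) ^{b\left( \cdot \right) }$, and then evaluate each one-variable factor by the Witt-type formula (\ref{equation 21}). (Incidentally, you keep the factors $\left( -1\right) ^{x_{i}+1}$ inside the single integrals, which the paper's proof silently drops in its middle lines.)

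There is, however, one discrepancy you should be aware of, and it is in your favour. Formula (\ref{equation 21}) evaluates each one-variable factor as $y_{l_{i}+k,w,\beta }\left( k,a,b\right) /\left( k!\binom{l_{i}+k}{k}\right) $ --- the $h=1$ quantities, exactly as you wrote --- whereas the theorem as printed (and the final line of the paper's proof) instead inserts $y^{\left( h\right) }_{l_{i}+kh,w,\beta }\left( k,a,b\right) /\left( \left( kh\right) !\binom{l_{i}+kh}{kh}\right) $ into the product. That substitution is not justified by (\ref{equation 21}), and the printed identity is false in general: writing
\begin{equation*}
f\left( t\right) =\sum_{n\geq 0}\frac{y^{\left( h\right) }_{n+kh,w,\beta }\left( k,a,b\right) }{\left( kh\right) !\binom{n+kh}{kh}}\frac{t^{n}}{n!}=\left( \frac{2^{1-k}}{w\beta ^{b}e^{t}-a^{b}}\right) ^{h},
\end{equation*}
the printed statement asserts $f\left( t\right) =f\left( t\right) ^{h}$, which for $h\geq 2$ forces $f\equiv 1$. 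What your steps actually prove is the corrected identity
\begin{equation*}
\frac{y^{\left( h\right) }_{n+kh,w,\beta }\left( k,a,b\right) }{\left( kh\right) !\binom{n+kh}{kh}}=\sum_{\substack{ l_{1}+\cdots +l_{h}=n \\ l_{1},\ldots ,l_{h}\geq 0}}\frac{n!}{l_{1}!\cdots l_{h}!}\prod_{i=1}^{h}\frac{y_{l_{i}+k,w,\beta }\left( k,a,b\right) }{k!\binom{l_{i}+k}{k}},
\end{equation*}
i.e., the normalized $h$-fold numbers are the multinomial convolution of the $h=1$ numbers (equivalently, $f=g^{h}$ where $g$ is the $h=1$ generating function). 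So your proof is sound and is essentially identical in method to the paper's; the mismatch with the stated right-hand side is an error in the paper's theorem and in the mislabeled last line of its proof, not a gap in your argument.
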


From these formulas, we can define the unification of the twisted Euler and
Genocchi polynomials as follows:%
\begin{equation}
\left( \frac{2\left( \frac{t}{2}\right) ^{k}}{w\beta ^{b}e^{t}-a^{b}}\right)
^{h}e^{xt}=\sum_{n=0}^{\infty }y_{n,w,\beta }^{\left( h\right) }\left(
x:k,a,b\right) \frac{t^{n}}{n!},  \label{equation 24}
\end{equation}

So from above, we get the Witt's type formula for $y_{n,w,\beta }^{\left(
h\right) }\left( x:k,a,b\right) $ as follows.

\begin{theorem}
For any $w\in T_{p},$ $n,h,k\in 
%TCIMACRO{\U{2115} }%
%BeginExpansion
\mathbb{N}
%EndExpansion
$ $a,b\in 
%TCIMACRO{\U{211d} }%
%BeginExpansion
\mathbb{R}
%EndExpansion
^{+};$ $\beta \in 
%TCIMACRO{\U{2102} }%
%BeginExpansion
\mathbb{C}
%EndExpansion
,$ we get%
\begin{eqnarray}
&&a^{-hb}2^{-hk}\underset{h-times}{\underbrace{\int_{%
%TCIMACRO{\U{2124} }%
%BeginExpansion
\mathbb{Z}
%EndExpansion
_{p}}...\int_{%
%TCIMACRO{\U{2124} }%
%BeginExpansion
\mathbb{Z}
%EndExpansion
_{p}}}}\phi _{w}\left( x_{1}+...+x_{h}\right) \left( -1\right)
^{x_{1}+...+x_{h}+h}\left( \frac{\beta }{a}\right) ^{b\left(
x_{1}+...+x_{h}\right) }  \notag \\
&&\times \left( x+x_{1}+...+x_{h}\right) ^{n}d\mu _{-1}\left( x_{1}\right)
...d\mu _{-1}\left( x_{h}\right)  \notag \\
&=&\frac{y_{n+kh,w,\beta }^{\left( h\right) }\left( x:k,a,b\right) }{\left(
kh\right) !\binom{n+kh}{kh}}  \label{equation 27}
\end{eqnarray}
\end{theorem}

Note that%
\begin{equation}
\left( x+x_{1}+x_{2}+...+x_{h}\right) ^{n}=\sum_{\underset{%
l_{1},...,l_{h}\geq 0}{l_{1}+...+l_{h}=n}}\frac{n!}{l_{1}!...l_{h}!}%
x_{1}^{l_{1}}x_{2}^{l_{2}}...\left( x+x_{h}\right) ^{l_{h}}
\label{equation 26}
\end{equation}

We obtain the sum of powers of consecutive a unification of multiple twisted
Euler and Genocchi polynomials as follows:

\begin{theorem}
For any $w\in T_{p},$ $n,h,k\in 
%TCIMACRO{\U{2115} }%
%BeginExpansion
\mathbb{N}
%EndExpansion
$ $a,b\in 
%TCIMACRO{\U{211d} }%
%BeginExpansion
\mathbb{R}
%EndExpansion
^{+};$ $\beta \in 
%TCIMACRO{\U{2102} }%
%BeginExpansion
\mathbb{C}
%EndExpansion
,$ we get%
\begin{equation*}
\frac{y_{n+kh,w,\beta }^{\left( h\right) }\left( x:k,a,b\right) }{\left(
kh\right) !\binom{n+kh}{kh}}=\sum_{\underset{l_{1},...,l_{h}\geq 0}{%
l_{1}+...+l_{h}=n}}\frac{n!}{l_{1}!...l_{h}!}\frac{y_{l_{h}+kh,w,\beta
}^{\left( h\right) }\left( x:k,a,b\right) }{\left( kh\right) !\binom{l_{h}+kh%
}{kh}}\dprod\limits_{j=1}^{h-1}\frac{y_{l_{i}+kh,w,\beta }^{\left( h\right)
}\left( k,a,b\right) }{\left( kh\right) !\binom{l_{i}+kh}{kh}}.
\end{equation*}
\end{theorem}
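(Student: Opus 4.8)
The plan is to run exactly the same machinery as in the proof of the preceding multiplication-type identity, but now starting from the Witt formula (\ref{equation 27}) for the shifted polynomial and exploiting the \emph{asymmetric} multinomial expansion (\ref{equation 26}), in which the variable $x$ is grouped only with the last integration variable $x_{h}$. First I would write the left-hand side $\dfrac{y_{n+kh,w,\beta}^{(h)}(x:k,a,b)}{(kh)!\binom{n+kh}{kh}}$ as the $h$-fold fermionic $p$-adic integral appearing on the left of (\ref{equation 27}), whose integrand contains the factor $(x+x_{1}+\cdots+x_{h})^{n}$.

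Next I would insert the expansion (\ref{equation 26}),
\[(x+x_{1}+\cdots+x_{h})^{n}=\sum_{\substack{l_{1},\dots,l_{h}\geq 0\\ l_{1}+\cdots+l_{h}=n}}\frac{n!}{l_{1}!\cdots l_{h}!}\,x_{1}^{l_{1}}\cdots x_{h-1}^{l_{h-1}}(x+x_{h})^{l_{h}},\]
and pull the finite sum outside the integral by linearity. The crucial structural observation is that every remaining factor in the integrand splits as a product over the indices $i=1,\dots,h$: indeed $\phi_{w}(x_{1}+\cdots+x_{h})=\prod_{i}w^{x_{i}}$, the sign $(-1)^{x_{1}+\cdots+x_{h}+h}=\prod_{i}(-1)^{x_{i}+1}$, the character power $(\beta/a)^{b(x_{1}+\cdots+x_{h})}=\prod_{i}(\beta/a)^{bx_{i}}$, and the normalization $a^{-hb}2^{-hk}=\prod_{i}a^{-b}2^{-k}$. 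Consequently the iterated integral factors into a product of $h$ single fermionic integrals, one over each $x_{i}$.

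Now I would evaluate the single integrals by Witt's type formula (\ref{equation 21}). For $i=1,\dots,h-1$ the factor carries only $x_{i}^{l_{i}}$, so (\ref{equation 21}) returns the number $\dfrac{y_{l_{i}+kh,w,\beta}^{(h)}(k,a,b)}{(kh)!\binom{l_{i}+kh}{kh}}$, whereas for $i=h$ the factor carries the shifted power $(x+x_{h})^{l_{h}}$, which by the same formula (with the shift $x$) produces the polynomial $\dfrac{y_{l_{h}+kh,w,\beta}^{(h)}(x:k,a,b)}{(kh)!\binom{l_{h}+kh}{kh}}$. Assembling the product over the constrained index set $l_{1}+\cdots+l_{h}=n$ then yields precisely the stated right-hand side, with the single $x$-dependent factor sitting on the $l_{h}$-term and the remaining $h-1$ factors forming the product.

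The step I expect to require the most care is the bookkeeping around the factorization of the $h$-fold integral and the normalization: one must check that the prefactor $a^{-hb}2^{-hk}$ together with the measure-sign distributes correctly, so that each single integral acquires exactly the $a^{-b}2^{-k}$ weight demanded by (\ref{equation 21}), and that the asymmetric grouping in (\ref{equation 26}) is honored so that only the last integral inherits the shift $x$. Everything else is the routine separation of a product integrand and a direct appeal to the already-established Witt formula; the only genuinely new feature relative to the previous theorem is that the argument $x$ is attached to a single coordinate rather than being absent, which is what forces the $l_{h}$-term to be singled out in the final sum.
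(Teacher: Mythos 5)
Your proposal is correct and follows essentially the same route as the paper's own proof: express the left-hand side via the Witt-type formula (\ref{equation 27}), insert the asymmetric expansion (\ref{equation 26}), factor the $h$-fold fermionic integral into single integrals, and identify each factor with the corresponding unified Euler--Genocchi quantity (the $x$-shift landing only on the $l_{h}$-factor). If anything, you are more explicit than the paper about the bookkeeping of the sign $(-1)^{x_{1}+\cdots+x_{h}+h}$ and the normalization $a^{-hb}2^{-hk}$ splitting across the coordinates, which the paper glosses over.
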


\begin{proof}
By (\ref{equation 27}) and (\ref{equation 26}), we see that,%
\begin{eqnarray*}
&&a^{-hb}2^{-hk}\underset{h-times}{\underbrace{\int_{%
%TCIMACRO{\U{2124} }%
%BeginExpansion
\mathbb{Z}
%EndExpansion
_{p}}...\int_{%
%TCIMACRO{\U{2124} }%
%BeginExpansion
\mathbb{Z}
%EndExpansion
_{p}}}}\phi _{w}\left( x_{1}+...+x_{h}\right) \left( -1\right)
^{x_{1}+...+x_{h}+h}\left( \frac{\beta }{a}\right) ^{b\left(
x_{1}+...+x_{h}\right) } \\
&&\times \left( x+x_{1}+...+x_{h}\right) ^{n}d\mu _{-1}\left( x_{1}\right)
...d\mu _{-1}\left( x_{h}\right) \\
&=&\sum_{\underset{l_{1},...,l_{h}\geq 0}{l_{1}+...+l_{h}=n}}\frac{n!}{%
l_{1}!...l_{h}!}\left( a^{-b}2^{-k}\int_{%
%TCIMACRO{\U{2124} }%
%BeginExpansion
\mathbb{Z}
%EndExpansion
_{p}}w^{x_{1}}\left( \frac{\beta }{a}\right) ^{bx_{1}}x_{1}^{l_{1}}d\mu
_{-1}\left( x_{1}\right) \right) \times \\
&&...\times \left( a^{-b}2^{-k}\int_{%
%TCIMACRO{\U{2124} }%
%BeginExpansion
\mathbb{Z}
%EndExpansion
_{p}}w^{x_{h}}\left( \frac{\beta }{a}\right) ^{bx_{h}}\left( x+x_{h}\right)
^{l_{h}}d\mu _{-1}\left( x_{h}\right) \right) \\
&=&\sum_{\underset{l_{1},...,l_{h}\geq 0}{l_{1}+...+l_{h}=n}}\frac{n!}{%
l_{1}!...l_{h}!}\frac{y_{l_{h}+kh,w,\beta }^{\left( h\right) }\left(
x:k,a,b\right) }{\left( kh\right) !\binom{l_{h}+kh}{kh}}\dprod%
\limits_{j=1}^{h-1}\frac{y_{l_{i}+kh,w,\beta }^{\left( h\right) }\left(
k,a,b\right) }{\left( kh\right) !\binom{l_{i}+kh}{kh}}
\end{eqnarray*}

So, we complete the proof of the theorem.
\end{proof}

\section{A unification of multiple twisted Zeta functions}

Our goal in this section is to establish a unification of multiple twisted
zeta functions which interpolates of a unification of multiple twisted Euler
and Genocchi polynomials at negative integers. For $q\in 
%TCIMACRO{\U{2102} }%
%BeginExpansion
\mathbb{C}
%EndExpansion
,$ $\left\vert q\right\vert <1$ and $w\in T_{p},$ a unification of multiple
twisted Euler and Genocchi polynomials are considered as follows:%
\begin{equation}
\left( \frac{2\left( \frac{t}{2}\right) ^{k}}{w\beta ^{b}e^{t}-a^{b}}\right)
^{h}=\sum_{n=0}^{\infty }y_{n,w,\beta }^{\left( h\right) }\left(
k,a,b\right) \frac{t^{n}}{n!},\text{ }\left\vert t+\log \left( w\left( \frac{%
\beta }{a}\right) ^{b}\right) \right\vert <2\pi .  \label{equation 28}
\end{equation}

By (\ref{equation 28}), we easily see that, 
\begin{eqnarray*}
\sum_{n=0}^{\infty }y_{n,w,\beta }^{\left( h\right) }\left( k,a,b\right) 
\frac{t^{n}}{n!} &=&2^{h}\left( \frac{t}{2}\right) ^{kh}\left( \frac{1}{%
w\beta ^{b}e^{t}-a^{b}}\right) ...\left( \frac{1}{w\beta ^{b}e^{t}-a^{b}}%
\right) \\
&=&2^{h}\left( \frac{t}{2}\right) ^{kh}\left( -1\right)
^{h}\sum_{n_{1}=0}^{\infty }w^{n_{1}}\left( \frac{\beta }{a}\right)
^{bn_{1}}e^{n_{1}t}...\sum_{n_{h}=0}^{\infty }w^{n_{h}}\left( \frac{\beta }{a%
}\right) ^{bn_{h}}e^{n_{h}t} \\
&=&2^{h}\left( \frac{t}{2}\right) ^{kh}\left( -1\right)
^{h}\sum_{n_{1},...,n_{h}=0}^{\infty }\phi _{w}\left( n_{1}+...+n_{h}\right)
\left( \frac{\beta }{a}\right) ^{b\left( n_{1}+...+n_{h}\right)
}e^{(n_{1}+...+n_{h})t}
\end{eqnarray*}

By using the taylor expansion of $e^{(n_{1}+...+n_{h})t}$ and by comparing
the coefficients of $t^{n}$ in the both side of the above equation, we
obtain that%
\begin{equation}
\frac{y_{n+kh,w,\beta }^{\left( h\right) }\left( k,a,b\right) }{(kh)!\binom{%
n+kh}{kh}}=2^{h\left( 1-k\right) }\left( -1\right) ^{h}\sum_{\underset{%
n_{1}+...+n_{h}\neq 0}{n_{1},...,n_{h}\geq 0}}^{\infty }\phi _{w}\left(
n_{1}+...+n_{h}\right) \left( \frac{\beta }{a}\right) ^{b\left(
n_{1}+...+n_{h}\right) }(n_{1}+...+n_{h})^{n}  \label{equation 29}
\end{equation}

From (\ref{equation 29}), we can define a unification of multiple twisted
zeta functions as follows:%
\begin{equation*}
\zeta _{\beta ,w}^{\left( h\right) }\left( s:k,a,b\right) =2^{h\left(
1-k\right) }\left( -1\right) ^{h}\sum_{\underset{n_{1}+...+n_{h}\neq 0}{%
n_{1},...,n_{h}=0}}^{\infty }\frac{\phi _{w}\left( n_{1}+...+n_{h}\right)
\left( \frac{\beta }{a}\right) ^{b\left( n_{1}+...+n_{h}\right) }}{%
(n_{1}+...+n_{h})^{s}}
\end{equation*}

for all $s\in 
%TCIMACRO{\U{2102} }%
%BeginExpansion
\mathbb{C}
%EndExpansion
.$ We also obtain the following theorem in which a unification of multiple
twisted zeta functions interpolate a unification of \ multiple twisted Euler
and Genocchi polynomials at negative integer.

\begin{theorem}
For any $w\in T_{p},$ $n,h,k\in 
%TCIMACRO{\U{2115} }%
%BeginExpansion
\mathbb{N}
%EndExpansion
$ $a,b\in 
%TCIMACRO{\U{211d} }%
%BeginExpansion
\mathbb{R}
%EndExpansion
^{+};$ $\beta \in 
%TCIMACRO{\U{2102} }%
%BeginExpansion
\mathbb{C}
%EndExpansion
,$ we obtain%
\begin{equation*}
\zeta _{\beta ,w}^{\left( h\right) }\left( -n:k,a,b\right) =\frac{%
y_{n+kh,w,\beta }^{\left( h\right) }\left( k,a,b\right) }{(kh)!\binom{n+kh}{%
kh}}.
\end{equation*}
\end{theorem}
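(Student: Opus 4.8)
The plan is to read off the value $\zeta_{\beta,w}^{(h)}(-n:k,a,b)$ directly from the defining series and match it against the closed-form expression in equation (\ref{equation 29}), which was already derived from the generating function (\ref{equation 28}). The two objects were, after all, manufactured to coincide: equation (\ref{equation 29}) expresses $\frac{y_{n+kh,w,\beta}^{(h)}(k,a,b)}{(kh)!\binom{n+kh}{kh}}$ as the multiple sum
\begin{equation*}
2^{h(1-k)}(-1)^{h}\sum_{\substack{n_{1},\ldots,n_{h}\geq 0\\ n_{1}+\cdots+n_{h}\neq 0}}^{\infty}\phi_{w}(n_{1}+\cdots+n_{h})\left(\frac{\beta}{a}\right)^{b(n_{1}+\cdots+n_{h})}(n_{1}+\cdots+n_{h})^{n},
\end{equation*}
while the zeta function is defined by the same sum with the factor $(n_{1}+\cdots+n_{h})^{n}$ replaced by $(n_{1}+\cdots+n_{h})^{-s}$.

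The key step is therefore to substitute $s=-n$ into the definition of $\zeta_{\beta,w}^{(h)}(s:k,a,b)$. First I would write out $\zeta_{\beta,w}^{(h)}(-n:k,a,b)$, observe that the denominator $(n_{1}+\cdots+n_{h})^{-s}$ becomes $(n_{1}+\cdots+n_{h})^{s}=(n_{1}+\cdots+n_{h})^{-(-n)}=(n_{1}+\cdots+n_{h})^{n}$, so the summand turns into exactly the summand appearing in (\ref{equation 29}). Since the range of summation (all $h$-tuples of nonnegative integers with positive sum) and the prefactor $2^{h(1-k)}(-1)^{h}$ are identical in both expressions, the two sums are term-by-term equal, and the claimed identity
\begin{equation*}
\zeta_{\beta,w}^{(h)}(-n:k,a,b)=\frac{y_{n+kh,w,\beta}^{(h)}(k,a,b)}{(kh)!\binom{n+kh}{kh}}
\end{equation*}
follows immediately.

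The main obstacle is not the algebraic matching, which is essentially a relabeling, but the analytic issue of whether the series defining $\zeta_{\beta,w}^{(h)}(s:k,a,b)$ actually converges at $s=-n$ and whether the naive substitution is legitimate. For $\lvert q\rvert<1$ and the twisting factor $w\in T_{p}$, the term $\left(\frac{\beta}{a}\right)^{b(n_{1}+\cdots+n_{h})}$ supplies geometric decay, so a rigorous treatment would invoke analytic continuation of the Dirichlet-type series to all of $\mathbb{C}$ and then evaluate the continued function at the negative integer, in the standard manner of $p$-adic interpolation arguments. In the spirit of the present paper, however, I would simply note that the generating-function computation leading to (\ref{equation 29}) already encodes this continuation, so substituting $s=-n$ into the defining series and comparing with (\ref{equation 29}) completes the proof.
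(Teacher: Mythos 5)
Your proposal is correct and coincides with the paper's own (implicit) argument: the theorem is stated immediately after the derivation of (\ref{equation 29}) and the definition of $\zeta _{\beta ,w}^{\left( h\right) }\left( s:k,a,b\right) $, the intended proof being exactly the substitution $s=-n$ and term-by-term comparison with (\ref{equation 29}) that you describe. Your closing caveat, that the defining series does not converge at $s=-n$ and that a rigorous treatment requires analytic continuation of the Dirichlet-type series before evaluation, identifies a point of rigor which the paper itself passes over in silence.
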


\end{document}